\documentclass[a4paper, 12pt, final]{amsart}
\usepackage{amscd}
\usepackage{amssymb}
\usepackage[all]{xy}
\usepackage[nohug]{diagrams}
\linespread{1.1}
\addtolength{\hoffset}{-1cm} \addtolength{\textwidth}{2.5cm}
\addtolength{\voffset}{-1cm}

\newcommand{\Ker}{\operatorname{Ker}}

\newcommand{\iso}{\overset{\sim}{\longrightarrow}}
\newcommand{\C}{\mathbb{C}}
 \renewcommand{\P}{\mathbb{P}}
\newcommand{\Q}{\mathbb{Q}}
\newcommand{\Z}{\mathbb{Z}}
\newcommand{\Le}{\mathbb{L}}
\DeclareMathAlphabet{\mathbbb}{U}{bbold}{m}{n}
\newcommand{\un}{\mathbbb{1}}

\newcommand{\sA}{\mathcal{A}}

\newcommand{\sM}{\mathcal{M}}
\newcommand{\sC}{\mathcal{C}}

\newcommand{\rat}{\text{rat}}

\newcommand{\To}{\longrightarrow}
\newtheorem{thm}[equation]{Theorem}
\newtheorem{lm}[equation]{Lemma}

\newtheorem{conj}[equation]{Conjecture}
\newtheorem{prop}[equation]{Proposition}
\theoremstyle{definition}
\newtheorem{defn}[equation]{Definition}
\newtheorem{ex}[equation]{Example}
\newtheorem{exs}[equation]{Examples}
\newtheorem{rk}[equation]{Remark}
\newtheorem{rks}[equation]{Remarks}

\setcounter{tocdepth}{1}


\begin{document}

\title[Derived equivalence and motives]{Derived categories of coherent sheaves and  motives  of K3 surfaces} 
 
\author{A. Del Padrone} 
\address{Dipartimento di Matematica\\
Universit\`a degli Studi di Genova\\
Via Dodecaneso 35\\
16146 Genova\\
Italy} 
\email[A. Del Padrone]{delpadro@dima.unige.it}

\author{C. Pedrini}\thanks{The authors are members of INdAM-GNSAGA}
\address{Dipartimento di Matematica\\
Universit\`a degli Studi di Genova\\
Via Dodecaneso 35\\
16146 Genova\\
Italy} 
\email[C. Pedrini]{pedrini@dima.unige.it}

\maketitle

\begin{abstract}  
Let  $X$ and $Y$ be smooth complex projective varieties.  
We will denote  by  $D^b(X)$ and $D^b(Y)$ their derived categories of bounded complexes of coherent sheaves;
$X$ and $Y$ are  derived equivalent if there is  a $\C$-linear equivalence $F \colon D^b(X) \iso D^b(Y)$. 
Orlov conjectured that if $X$ and $Y$ are  derived equivalent then their motives $M(X)$ and $M(Y)$ are isomorphic
in Voevodsky's  triangulated category of motives $DM_{gm}(\C)$ with $\Q$-coefficients.
In this paper we prove the conjecture in the case $X$ is a K3 surface admitting an elliptic 
fibration (a case that always occurs if the Picard rank $ \rho(X)$ is at least 5) 
with  finite-dimensional Chow motive. 
We also relate this result with a conjecture by Huybrechts showing that,  
for a K3 surface with a symplectic involution $f$,  
the finite-dimensionality of its motive implies that $f$ acts as the identity on the Chow group of $0$-cycles. 
We give examples of pairs of K3 surfaces with the same finite-dimensional motive but not derived equivalent.
\end{abstract}

\section {introduction} 

Let $X$ be a smooth projective variety over  $\C$.  
We will denote  by  $D^b(X)$ the derived category of bounded complexes of coherent sheaves on $X$.
We say that two smooth projective varieties $X$ and $Y$ are {\bf derived equivalent} if there is 
a $\C$-linear equivalence $F \colon D^b(X) \iso D^b(Y)$ ([Ro], [B-B-HR]).
It is a fundamental result of Orlov  [Or1, Th. 2.19] that
every such equivalence is a {\bf Fourier-Mukai transform}, 
i.e. there is an object $\sA \in D^b(X \times Y)$, unique up to isomorphism, called its {\bf kernel}, 
such that $F$ is isomorphic to the functor $\Phi_{\sA}:= p_*(q^*(-) \otimes \sA)$, 
where $p_*$,  $q^*$ and $\otimes$ are derived functors. 
Therefore such pairs $X$ and $Y$ are also called {\bf Fourier-Mukai partners}.
Orlov also proved the following Theorem and stated the conjecture below.

\begin{thm}\label{TeoOrDer} ([Or2, Th. 1])
If $\dim X =\dim Y =n$ and $\Phi_{\sA} \colon D^b(X) \To D^b(Y)$ is an exact fully faithful functor 
satisfying the following condition
$$
(*)\quad\textrm{ the dimension of  the support of }\sA \in D^b(X \times Y) \textrm{ is } n,  
$$
then the motive $M(X)_{\Q}$ is a direct summand of $M(Y)_{\Q}$. 
If in addition the functor $\Phi_{\sA}$ is an equivalence  then the motives $M(X)_{\Q}$ and $M(Y)_{\Q}$ are isomorphic 
in Voevodsky's triangulated category of motives  $DM_{gm}(\C)_{\Q}$. 
Moreover the same results hold true at the level of {\it integral} motives.
\end{thm}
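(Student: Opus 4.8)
The plan is to translate the derived-categorical hypothesis into a statement about algebraic correspondences. Recall that in $DM_{gm}(\C)_\Q$ the motives of smooth projective varieties, with morphisms given by Chow correspondences, form a copy of the category of Chow motives with $\Q$-coefficients: a cycle class $\gamma\in CH^n(X\times Y)_\Q$ (with $\dim X=\dim Y=n$) induces a morphism $M(X)_\Q\To M(Y)_\Q$, composition of morphisms is composition of correspondences, $[\Delta_X]$ induces $\mathrm{id}_{M(X)_\Q}$, and the category is idempotent complete; over $\C$ the same holds integrally. So the theorem reduces to producing $\alpha\in CH^n(X\times Y)_\Q$ and $\beta\in CH^n(Y\times X)_\Q$ with $\beta\circ\alpha=[\Delta_X]$, and in the equivalence case also $\alpha\circ\beta=[\Delta_Y]$. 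These cycles will come from $\sA$ itself. By Grothendieck--Serre duality the left adjoint of $\Phi_\sA$ is the Fourier--Mukai functor with kernel $\sA_L=\sA^\vee\otimes p_Y^*\omega_Y[n]\in D^b(X\times Y)$, and $\Phi_\sA$ is fully faithful exactly when the counit $\Phi_{\sA_L}\circ\Phi_\sA\To\mathrm{id}_{D^b(X)}$ is an isomorphism, i.e., by uniqueness of the kernel, when $\sA_L*\sA\cong\mathcal O_{\Delta_X}$, where $*$ denotes convolution of kernels. If $\Phi_\sA$ is an equivalence then $\Phi_{\sA_L}$ is a quasi-inverse, so also $\sA*\sA_L\cong\mathcal O_{\Delta_Y}$. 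Since $X\times Y$ is smooth, $\sA$ is perfect and $\sA_L$ has the same support as $\sA$, hence again satisfies $(*)$.

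To pass from complexes to correspondences, rationally, I would use the Mukai vector $v(\mathcal E):=\mathrm{ch}(\mathcal E)\cdot\sqrt{\mathrm{td}(T_{X\times Y})}\in\bigoplus_j CH^j(X\times Y)_\Q$, together with its analogue on products of two factors. It factors through $K_0$, and Grothendieck--Riemann--Roch for the projections of the triple product, combined with the projection formula, yields the composition law $v(\mathcal F*\mathcal E)=v(\mathcal F)\circ v(\mathcal E)$, where on the right one composes the total, mixed-degree correspondences. This is where hypothesis $(*)$ is used: since $\sA$ is supported in dimension $n$ inside the $2n$-dimensional $X\times Y$, the cycles $\mathrm{ch}_j(\sA)$ vanish for $j<n$, so $v(\sA)=\sum_{j=n}^{2n}v_j(\sA)$ with $v_j(\sA)\in CH^j(X\times Y)_\Q$, and likewise for $\sA_L$. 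Set $\alpha:=v_n(\sA)$ and $\beta:=v_n(\sA_L)$, viewed as morphisms $M(X)_\Q\To M(Y)_\Q$ and $M(Y)_\Q\To M(X)_\Q$.

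It remains to read off the identity from the composition law. Composing a codimension-$i$ correspondence with a codimension-$j$ correspondence produces a class of codimension $i+j-n$; since $v(\sA)$ and $v(\sA_L)$ are concentrated in codimensions $\ge n$, the only summand of $v(\sA_L)\circ v(\sA)$ landing in $CH^n(X\times X)_\Q$, the group that contains $[\Delta_X]=\mathrm{id}_{M(X)_\Q}$, is $v_n(\sA_L)\circ v_n(\sA)=\beta\circ\alpha$. But $v(\sA_L)\circ v(\sA)=v(\sA_L*\sA)=v(\mathcal O_{\Delta_X})$, whose codimension-$n$ component is $\mathrm{ch}_n(\mathcal O_{\Delta_X})=[\Delta_X]$; hence $\beta\circ\alpha=\mathrm{id}_{M(X)_\Q}$ and $M(X)_\Q$ is a direct summand of $M(Y)_\Q$. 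If $\Phi_\sA$ is an equivalence, the identical computation applied to $\sA*\sA_L\cong\mathcal O_{\Delta_Y}$ gives $\alpha\circ\beta=\mathrm{id}_{M(Y)_\Q}$, so $\alpha$ and $\beta$ are mutually inverse and $M(X)_\Q\cong M(Y)_\Q$.

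For the statement on integral motives I would replace the Mukai vector by the naive cycle class of a complex whose support has the expected dimension: at each $n$-dimensional component $W$ of $\mathrm{supp}(\sA)$ the stalk $\sA_{\eta_W}$ is a bounded complex of finite-length $\mathcal O_{X\times Y,\eta_W}$-modules, and, taking $m_W$ to be its Euler characteristic, one sets $[\sA]:=\sum_W m_W[W]\in CH^n(X\times Y)$, so that $[\mathcal O_{\Delta_X}]=[\Delta_X]$. Everything above then goes through with $\Z$-coefficients once one has the composition compatibility $[\mathcal F*\mathcal E]=[\mathcal F]\circ[\mathcal E]$ in codimension $n$, and this is the step I expect to be the main obstacle: it is a global form of Serre's $\mathrm{Tor}$-formula for intersection multiplicities and must be proved allowing for excess intersection in the triple product, where one has to check that only the behaviour at the generic points of the relevant diagonal contributes to the codimension-$n$ part. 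Granting this, $\beta\circ\alpha=[\sA_L]\circ[\sA]=[\sA_L*\sA]=[\Delta_X]$, and symmetrically on the $Y$-side in the equivalence case, which gives the integral versions of both assertions.
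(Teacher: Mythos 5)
Your argument is essentially the paper's (i.e.\ Orlov's) own: the theorem is quoted from [Or2], and the only argument the paper itself records (in the proof of Proposition \ref{PropFDStabilityDer}) is exactly your route --- Mukai vectors $v^{CH}$, Grothendieck--Riemann--Roch compatibility of $v^{CH}$ with convolution of kernels, and condition $(*)$ forcing $v^{CH}(\sA)$ (and the adjoint kernel's vector) to be concentrated in codimension $\geq n$, so that the degree-$n$ components compose to $[\Delta_X]$ (resp.\ $[\Delta_Y]$). The only caveat is that your integral refinement leaves the key compatibility $[\mathcal{F}\ast\mathcal{E}]=[\mathcal{F}]\circ[\mathcal{E}]$ in codimension $n$ unproved, but the paper likewise offers no proof of the integral statement and defers entirely to [Or2].
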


\begin{conj}\label{ConjOr} ([Or2, Conj. 1]) 
Let  $X$ and $Y$ be smooth projective varieties and let $F\colon D^b(X) \To D^b(Y)$
be a fully faithfull functor. Then the motive $M(X)_{\Q}$ is a direct summand of $M(Y)_{\Q}$.
If $F$ is an equivalence then the motives $M(X)_{\Q}$ and $M(Y)_{\Q}$ are isomorphic. 
\end{conj}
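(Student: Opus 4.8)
The plan is to reduce the statement to a computation with correspondences and to exploit the invertibility that full faithfulness forces at the level of kernels. First, by the Fourier--Mukai representability of such functors ([Or1]), $F$ is isomorphic to a transform $\Phi_{\sA}$ with kernel $\sA \in D^b(X\times Y)$. Full faithfulness of $\Phi_{\sA}$ means that, writing $\Phi_{\mathcal{B}}$ for its left adjoint (itself a Fourier--Mukai transform, with kernel $\mathcal{B}$ obtained from $\sA^{\vee}$ by a twist and shift), the composite $\Phi_{\mathcal{B}}\circ \Phi_{\sA}$ is isomorphic to $\mathrm{id}_{D^b(X)}$. Since the kernel of a composition of Fourier--Mukai functors is the convolution of the kernels and the kernel of the identity is the structure sheaf $\mathcal{O}_{\Delta_X}$ of the diagonal, this yields $\mathcal{B}*\sA \cong \mathcal{O}_{\Delta_X}$ in $D^b(X\times X)$. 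If $F$ is moreover an equivalence, one also gets $\sA*\mathcal{B}\cong \mathcal{O}_{\Delta_Y}$ in $D^b(Y\times Y)$.

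The second step is to transport these relations to $DM_{gm}(\C)_{\Q}$. To each object of $D^b(X\times Y)$ I would attach its Mukai correspondence $v(\sA)=\pi_X^*\sqrt{\mathrm{td}(X)}\cdot\mathrm{ch}(\sA)\cdot\pi_Y^*\sqrt{\mathrm{td}(Y)}\in\bigoplus_k\mathrm{CH}^k(X\times Y)_{\Q}$, and Grothendieck--Riemann--Roch shows that convolution of kernels corresponds to composition of these (inhomogeneous) correspondences. Hence $v(\sA)$ becomes invertible in the \emph{ungraded} correspondence ring, with inverse $v(\mathcal{B})$, the two composites being $v(\mathcal{O}_{\Delta_X})$ and $v(\mathcal{O}_{\Delta_Y})$. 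The whole problem is thereby reduced to showing that this ungraded invertibility produces a splitting, respectively an isomorphism, of the graded motives $M(X)_{\Q}$ and $M(Y)_{\Q}$.

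The hard part --- and the reason the statement is only conjectural --- is exactly this descent from the ungraded to the graded picture. A morphism $M(X)_{\Q}\to M(Y)_{\Q}$ in $DM_{gm}$ is given by the degree-$0$ component $\mathrm{ch}_{\dim X}(\sA)\in\mathrm{CH}^{\dim X}(X\times Y)_{\Q}$, but composition of graded correspondences is \emph{not} compatible with composition of the ungraded ones: the identity $v(\mathcal{B})\circ v(\sA)=v(\mathcal{O}_{\Delta_X})$ distributes its degree-$0$ part across cross-terms coming from the lower-codimension components of the Mukai vectors and from the Todd-class corrections, and these need not reassemble into $\Delta_X$. Orlov's Theorem~\ref{TeoOrDer} circumvents this precisely through hypothesis $(*)$: when the support of $\sA$ has dimension $n=\dim X$, the classes $\mathrm{ch}_k(\sA)$ vanish for $k<n$, so $v(\sA)$ begins in codimension $n$, its lowest term is already a degree-$0$ correspondence, and the leading term of the composition is forced to be $\Delta_X$, a filtration argument handling the higher-codimension corrections.

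Without $(*)$ --- for instance for the Poincar\'e kernel on $A\times\hat A$, which is a line bundle of rank one, so $\mathrm{ch}_0\neq 0$ and the support is everything --- the Mukai vector acquires components in codimension $<\dim X$ and this leading-term reasoning collapses. What I would attempt is to show that the inhomogeneous invertible correspondence $v(\sA)$ can be conjugated into a homogeneous, degree-$0$ one by an invertible correspondence built from the Lefschetz/Tate operators on $X$ and $Y$; equivalently, to introduce a weight (codimension) filtration on the correspondence ring and prove that the degree-$0$ piece of the associated graded of $v(\sA)$ is itself invertible, so that $\mathrm{ch}_{\dim X}(\sA)$ defines the required idempotent on $M(Y)_{\Q}$. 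Producing such a conjugation in general, with no control over the dimension of the support, is the essential obstacle, and is where an input genuinely beyond Theorem~\ref{TeoOrDer} seems to be needed.
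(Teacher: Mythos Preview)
The statement you are asked to address is a \emph{conjecture}, and the paper does not prove it; it records it as Orlov's Conjecture and then establishes only the special case of Theorem~\ref{TeoDerK3}. So there is no ``paper's own proof'' to compare against, and your write-up is, quite properly, not a proof either but an analysis of where the difficulty lies. On that score your diagnosis is accurate: the passage from an invertible \emph{ungraded} correspondence $v(\sA)$ to a degree-zero (graded) isomorphism is exactly the missing step, and your explanation of why hypothesis~$(*)$ in Theorem~\ref{TeoOrDer} dissolves the problem (the Chern character of a complex with $n$-codimensional support starts in degree $n$) is correct. Your suggested line of attack --- twisting $v(\sA)$ by invertible correspondences so that its degree-zero piece becomes invertible --- is essentially the content of Conjecture~\ref{Conj2Or} in the paper (twist by line bundles $L,M$), so you have rediscovered Orlov's own refinement of the problem, not a way around it.

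It is worth noting that the paper's actual progress on Conjecture~\ref{ConjOr} proceeds along an entirely different route from the one you sketch. Rather than trying to promote the ungraded isomorphism to a graded one by manipulating $v(\sA)$, the paper restricts to K3 surfaces, uses the Hodge isometry $T_{X,\Q}\simeq T_{Y,\Q}$ furnished by the derived equivalence, invokes Nikulin/Mukai to realise that isometry by an \emph{algebraic} degree-zero correspondence, and then appeals to finite-dimensionality (Theorem~\ref{TeoConservativity}) to lift the resulting numerical isomorphism of transcendental motives to a Chow-level isomorphism. In other words, the paper bypasses the filtration problem entirely by producing the required graded correspondence from Hodge theory and then using Kimura's nilpotence theorem as the conservativity input. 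Your approach is the ``head-on'' one and, as you yourself conclude, it does not yet yield a proof.
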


In [Hu1, 2.7] Huybrechts proved that if $F\colon D^b(X) \simeq D^b(X)$ is a self equivalence then 
it acts identically on cohomology if and only if it acts identically on Chow groups (see section 5). 
This naturally suggests  the following conjecture, which appears in [Hu2, Conj. 3.4].
\begin {conj}\label{ConjHuy}
Let $X$ be a complex K3 surface and let  $f \in \textrm{Aut}(X)$ be a symplectic automorphism, 
i.e. $f^*$ acts as the identity on $H^{2,0}(X)$. 
Then $f^*=id$ on $CH^2(X)$.
\end{conj}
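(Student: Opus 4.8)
This conjecture is open in general, and the plan is to prove it under the standing hypothesis of this paper that the Chow motive $M(X)$ is finite-dimensional in the sense of Kimura and O'Sullivan, treating the case in which $f$ has finite order (in particular a symplectic involution, i.e. Conjecture~\ref{ConjHuy} with $\ord(f)=2$). First I would pass to motives. Since $X$ is a K3 surface, $H^1(X)=0$ and $\mathrm{Alb}(X)=0$, so $M(X)$ carries a Chow--K\"unneth decomposition $M(X)=\un\oplus h^2(X)\oplus\Le^2$, refined to $h^2(X)=h^2_{alg}(X)\oplus t_2(X)$ with $h^2_{alg}(X)\cong\Le^{\oplus\rho(X)}$ the Picard part and $t_2(X)$ the transcendental motive, of Betti realization $H^2_{tr}(X)=T(X)_\Q$. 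Using the Beauville--Voisin $0$-cycle $o_X$ to define $\pi_0=[X\times o_X]$, $\pi_4=[o_X\times X]$ and the intersection form on $\NS(X)_\Q$ to define $\pi_2^{alg}$, one checks that this decomposition is $f$-equivariant, i.e. the graph $\Gamma_f$ commutes with every projector: here one uses that $f_*o_X=o_X$ (as $f$ permutes rational curves) and that $f^*$ is an isometry of $\NS(X)_\Q$. Hence $f_*$ induces an endomorphism $g\in\mathrm{End}(t_2(X))$, and on the canonical splitting $CH_0(X)_\Q=\Q\,o_X\oplus CH_0(X)_{\deg 0,\Q}$ the map $f_*$ is the identity on $\Q\,o_X$ and acts on $CH_0(X)_{\deg 0,\Q}=CH^2(t_2(X))_\Q$ through $g$. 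So it suffices to prove $g=\mathrm{id}_{t_2(X)}$.

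Next I would show $g$ is cohomologically trivial. With $\omega$ a generator of $H^{2,0}(X)$, the symplectic hypothesis gives $f^*\omega=\omega$, so $f^*=\mathrm{id}$ on $H^{2,0}(X)\oplus H^{0,2}(X)$. Then $f^*-\mathrm{id}$ is a morphism of $\Q$-Hodge structures on $H^2(X,\Q)$ killing $\omega$, so its image is a sub-Hodge structure with vanishing $(2,0)$-part; since $f^*$ preserves the orthogonal decomposition $H^2(X,\Q)=\NS(X)_\Q\oplus T(X)_\Q$ and $T(X)_\Q$ is an irreducible Hodge structure with $T(X)^{2,0}\neq 0$, this forces $f^*=\mathrm{id}$ on $T(X)_\Q=H^2_{tr}(X)$. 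Equivalently, the Betti realization of $g-\mathrm{id}_{t_2(X)}$ vanishes.

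Finally I would feed in finite-dimensionality. As a direct summand of the finite-dimensional motive $M(X)$, the motive $t_2(X)$ is finite-dimensional, so by the Kimura--O'Sullivan nilpotence theorem the homologically (hence numerically) trivial endomorphism $g-\mathrm{id}_{t_2(X)}$ is nilpotent, say $g=\mathrm{id}_{t_2(X)}+\nu$ with $\nu^N=0$ for some $N\geq 1$. If $f$ has order $n$, then by $f$-equivariance $g^n=(\Gamma_f)^{\circ n}\circ\pi_2^{tr}=\Delta_X\circ\pi_2^{tr}=\mathrm{id}_{t_2(X)}$, so $(\mathrm{id}+\nu)^n=\mathrm{id}$, i.e. $\nu\bigl(n\,\mathrm{id}+\tbinom{n}{2}\nu+\cdots\bigr)=0$; the factor $n\,\mathrm{id}+\tbinom{n}{2}\nu+\cdots=n(\mathrm{id}+\text{nilpotent})$ is invertible in the $\Q$-algebra $\mathrm{End}(t_2(X))$, whence $\nu=0$ and $g=\mathrm{id}_{t_2(X)}$. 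By the first step $f_*=\mathrm{id}$ on $CH_0(X)_\Q$, and since $CH_0(X)$ is torsion-free (Roitman's theorem, as $\mathrm{Alb}(X)=0$) the map $CH^2(X)\to CH^2(X)_\Q$ is injective, so $f^*=(f_*)^{-1}=\mathrm{id}$ on $CH^2(X)$.

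The hard part is the finite-dimensionality input: Kimura's conjecture is open for general K3 surfaces, so unconditionally Conjecture~\ref{ConjHuy} remains out of reach by this route, although the argument does apply, e.g., to Kummer surfaces and to K3 surfaces dominated by products of curves. A second, subtler gap is the passage from finite-order $f$ to an arbitrary symplectic automorphism: symplectic automorphisms of infinite order do exist on projective K3 surfaces --- for instance on $\mathrm{Km}(E\times E)$ for an elliptic curve $E$, induced by a hyperbolic $M\in\mathrm{SL}_2(\Z)$ with $M\equiv I\pmod 2$, which is symplectic since $\det M=1$ --- and for those $g^n=\mathrm{id}$ is unavailable, so one would have to upgrade ``$g-\mathrm{id}$ nilpotent'' to ``$g-\mathrm{id}$ semisimple'', which amounts to a case of the standard conjectures for $t_2(X)$.
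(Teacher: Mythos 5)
Your proposal is correct as a conditional statement, but note that the paper never proves Conjecture~\ref{ConjHuy} as stated: it remains a conjecture there, and the paper's actual contribution is the special case of a symplectic \emph{involution} under finite-dimensionality of $\mathfrak{h}(X)$ (Theorem~\ref{TeoNik}, resting on Lemma~\ref{Lemma} and Proposition~\ref{PropInvariance}), which is also the case you single out --- so the honest caveats you list (Kimura's conjecture open, infinite order out of reach) match the paper's own limitations. Your route, however, is genuinely different. The paper argues geometrically through the Nikulin quotient: it blows up the eight fixed points, passes to the quotient K3 surface $Y$, uses birational invariance of $\mathfrak{t}_2$ and the projector $p=\tfrac12\,\Gamma_g^t\circ\Gamma_g$ to split $\mathfrak{t}_2(X)=\mathfrak{t}_2(Y)\oplus N$, matches the algebraic parts via the trace computation $t=6$, observes $H^*(N)=0$ because $\iota$ acts trivially on $H^2_{\textrm{tr}}(X)$, kills $N$ by finite-dimensionality (Theorem~\ref{TeoConservativity}), and only then translates $\mathfrak{t}_2(X)\simeq\mathfrak{t}_2(Y)$ into $\iota^*=\mathrm{id}$ on $A_0(X)$ via Proposition~\ref{PropInvariance}. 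You instead stay entirely on $X$: an $f$-equivariant refined Chow--K\"unneth decomposition built from the Beauville--Voisin class, irreducibility of the transcendental Hodge structure to get that $g=\pi_2^{\textrm{tr}}\circ\Gamma_f\circ\pi_2^{\textrm{tr}}$ is homologically the identity (for an involution this is just Nikulin's triviality on $T_X$, which the paper quotes), and then Kimura nilpotence combined with $g^n=\mathrm{id}$ and invertibility of $n\,\mathrm{id}+(\textrm{nilpotent})$ in a $\Q$-algebra. What your approach buys is generality and economy: it covers every finite-order symplectic automorphism, not just involutions, and needs none of the Nikulin geometry; what the paper's approach buys is the stronger output $\mathfrak{h}(X)\simeq\mathfrak{h}(Y)$ for the quotient surface, which it then reuses for Theorem~\ref{TeoRho9} and Examples~\ref{ExamplesVG-S}. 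Two small points to keep explicit: the commutation of $\Gamma_f$ with $\pi_2^{\textrm{alg}}$ uses $\Pic^0(X)=0$, so that $[D\times D']$ depends only on N\'eron--Severi classes and the isometry-invariance of the Casimir element applies; and your closing remark that the infinite-order case ``amounts to a case of the standard conjectures'' is loose --- what is really needed is that homologically trivial endomorphisms of $\mathfrak{t}_2(X)$ vanish, a Bloch--Beilinson-type statement rather than the standard conjectures proper.
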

\medskip

\noindent 
In section 2 we recall some results on the finite dimensionality of motives and their Chow-K\"unneth decompositions.
\par
\noindent In section 3, after some general remarks on the derived equivalences between two 
smooth projective varieties $X$ and $Y$, we relate the derived equivalence with ungraded 
motives and finite-dimensionality (Proposition \ref{PropFDStabilityDer}).
\par
\noindent In section 4 we specialize to the case of K3 surfaces $X$ and $Y$ and prove our main result (Theorem \ref{TeoDerK3}): 
Orlov's conjecture holds true for K3 surfaces $X$ and $Y$ if the motive of $X$ is finite-dimensional  and $X$ admits an elliptic fibration, 
a case that always occurs if  the Picard rank $\rho(X)$ is at least 5. 
This restriction can possibly be removed, according to a claimed result by Mukai in [Mu2, Th2].
\par
\noindent In section 5 we consider the case of a K3 surface with a symplectic involution $\iota$ and prove (Theorem \ref{TeoNik})  
that Huybrechts' Conjecture \ref{ConjHuy} holds true for $f=\iota$ if $X$ has a finite-dimensional motive. 
We also show (Theorem \ref{TeoRho9} and Examples \ref{ExamplesVG-S}) the existence of K3 surfaces $X$ and $Y$ 
which are not derived equivalent but
with isomorphic motives.
\medskip

\noindent 
{\bf Akwnoledgements.}
We thank Claudio Bartocci for many helpful comments on a early draft of this paper.

\section {Categories of motives  and finite dimensionality}
Let $X$ be a smooth variety over a perfect field $k$ and let $CH^{i}(X)$ be 
the Chow group of cycles of codimension $i$ modulo rational equivalence. 
We will denote by $A^{i}(X)= CH^{i}(X)_{\Q}$ the $\Q$-vector space 
$CH^{i}(X)\otimes_{\Z}\Q$.
 
\subsection{Pure motives}\label{SubsecPM}
Let $\sM^{eff}_{rat} (k)$ be the {\it covariant} pseudo-abelian, tensor,  additive category of 
{\bf effective Chow motives}  with $\Q$-coefficients over a perfect field $k$. 
Its objects are couples $(X,p)$ where  $X$ is  a smooth projective variety 
and $p \in CH_{\dim\, X}(X \times X)_{\Q}$ is a projector, i.e. $p\circ p =p^2 =p$. 
Morphisms  between $(X,p)$ and $(Y,q)$ in $\sM^{eff}_{rat}$ are given by  correspondences 
$\Gamma \in A_{\dim\, X}(X \times Y)$.
More precisely:
$$
\textrm{Hom}_{\sM^{eff}_{rat}(k)}((X,p),(Y,q))=q\circ CH_{\dim\, X}(X\times Y)_{\Q}\circ p.
$$
The motive of a smooth projective variety $X$ is defined as  $\mathfrak{h}(X) = (X,\Delta_X)\in  \sM^{eff}_{rat}(k)$, thus giving
a covariant monoidal functor $h\colon \mathcal{S}mProj/k\To \sM^{eff}_{rat}(k)$ 
which sends $f\colon X\To Y$ to its graph $\mathfrak{h}(f)=[\Gamma_{f}]\colon \mathfrak{h}(X)\To \mathfrak{h}(Y)$.
Let $X =\P^1$, then the structure map $X \to \mathrm{Spec}(k) $ together with the inclusion 
of a closed point $P \in  \P^1$ (eventually defined over an algebraic extension of $k$, see [K-M-P, 7.2.8]) 
induces  a splitting
$$
\mathfrak{h}(\P^1) \simeq \un \oplus \mathbb L
$$
where $\un=(\mathrm{Spec}(k), \Delta_{\mathrm{Spec}(k)})\simeq (\P^{1}, [\P^{1}\times P])$ 
is the unit of the tensor structure and $\mathbb{L}=(\P^{1}, [P\times \P^{1}])$ is  the {\bf Lefschetz motive}. 
By $\sM_{rat}(k)$ we will denote the tensor category of {\bf covariant Chow motives}, 
obtained from $\sM^{eff}_{rat} (k)$ by inverting $\mathbf L$, as in [K-M-P].
\smallskip

\noindent
We will also consider  the $\Q$-linear rigid tensor category of {\bf ungraded covariant Chow motives} 
$\mathcal{UM}_{rat}(k)$ (see for example [Ma, \S 2, \S 3, p. 459] and [D-M, 1.3]).
It is the pseudo-abelian hull of the $\Q$-linear additive category of ungraded correspondences. 
Hence, its objects are pairs $(X,e)$ with $X$ a smooth projective variety,  
$e \in CH_{*}(X \times X)_{\Q} =\oplus_{i=0}^{2\dim\, X}CH_{i}(X \times X)_{\Q}$ 
a projector, and 
$$
\textrm{Hom}_{\mathcal{UM}_{rat}(k)}((X,e),(Y,f))=f\circ CH_{*}(X\times Y)_{\Q}\circ e;
$$
the ungraded motive of $X$ is $\mathfrak{h}(X)_{\textrm{un}}:=(X,\Delta_{X})$; its endomorphism algebra is
the $\Z$-graded ring (w.r.t. composition of correspondences, see [Ma, \S4 p. 452])
$$
\textrm{End}_{\mathcal{UM}_{rat}(k)}(\mathfrak{h}(X)_{\textrm{un}})=CH_{*}(X\times X)_{\Q}.
$$
$\mathcal{UM}_{rat}(k)$ is a rigid $\Q$-linear tensor category in the obvious way.
\medskip

\subsection{Mixed motives}
Let $DM^{eff}_{gm}(k)$ be the triangulated  category of {\bf effective  geometrical motives} 
constructed by Voevodsky in [Voev].
We recall that there is a covariant functor $M \colon  Sm/k \to DM^{eff}_{gm}(k)$ where $Sm/k$
 is the category of smooth schemes of finite type over $k$. 
We shall write $DM_{gm}^{eff}(k,\Q)$ for the pseudo-abelian hull of the category obtained 
from $DM^{eff}_{gm}(k)$ by tensoring morphisms with $\Q$, and usually abbreviate 
it into $DM_{gm}^{eff}(k)$. 
Then $M$ induces a covariant  functor 
$$
\Phi \colon  \sM^{eff}_{rat}(k) \to  DM_{gm}^{eff}(k)
$$
which is a full embedding. 
We will denote by $DM_{gm}(k)=DM_{gm}(k,\Q) $ the category obtained from 
$DM_{gm}^{eff}(k)$ by inverting  the image $\Q(1)$ of $\mathbf L$. 
Hence, for two smooth projective varieties $X$ and $Y$, $\mathfrak{h}(X) \simeq \mathfrak{h}(Y)$ in $\sM_{rat} (k)$  
if and only if the images $M(X)$ and $M(Y)$ are isomorphic in $DM_{gm}(k)$.
\smallskip

\subsection{Finite-dimensional motives}
We now recall several notion of ``finiteness'' on motives (see [Ki, 3.7], [Maz, 1.3], [An1, 12] and [An2, 3]).
Let $\sC$ be a pseudoabelian, $\Q$-linear,  symmetric tensor category and let $A$ be an object in $\sC$. 
Thanks to the symmetry isomorphism of $\sC$ the symmetric group on $n$ letters $\Sigma _n$
acts naturally on the $n$-fold tensor product $A^{\otimes n}$ of $A$ by itself for each object $A$:
any $\sigma \in \Sigma_n$ defines a map 
$\sigma_{A^{\otimes n}} : A^{\otimes n} \to A^{\otimes n}$. 
We recall that there is a one-to-one correspondence between all irreducible representations
of the group $\Sigma_n$ (over $\Q $) and all partitions of the integer $n$.
Let $V_{\lambda }$ be the irreducible representation corresponding
to a partition $\lambda $ of $n$ and let $\chi _{\lambda }$ be the
character  of the representation $V_{\lambda }$, then
$$
d_{\lambda }= 
\frac{\dim (V_{\lambda })}{n!}\sum_{\sigma \in
\Sigma _n}\chi _{\lambda }(\sigma )\cdot\sigma\in\Q\Sigma_{n} 
$$
gives, when $\lambda$ varies among the partitions of $n$, a set of pairwise orthogonal central 
(non primitive) idempotents in the group algebra $\Q\Sigma_{n}$; the two-sided ideal
$(d_{\lambda})=d_{\lambda}\Q\Sigma_{n}$ is the isotypic component of $V_{\lambda}$ inside
$\Q\Sigma_{n}$ hence $(d_{\lambda})\cong V_{\lambda}^{\lambda}$ as $\Q\Sigma_{n}$-modules.  
Let
$$ 
d^{A}_{\lambda }= 
\frac{\dim (V_{\lambda })}{n!}\sum_{\sigma \in
\Sigma _n}\chi _{\lambda }(\sigma )\cdot\sigma_{A^{\otimes n}}
\in\mathrm{Hom}_{\sC}(A^{\otimes n},A^{\otimes n})
$$
where $\sigma_{A^{\otimes n}}$ is the morphism associated to $\sigma$.
Then $\{ d^{A}_{\lambda }\} $ is a set of pairwise orthogonal idempotents in 
$\mathrm{Hom}_{\sC}(A^{\otimes n},A^{\otimes n})$ such that 
$\sum d^{A}_{\lambda }=\textrm{Id}_{A^{\otimes n}}$. 
The category $\sC$ being pseudoabelian, they give a functorial decomposition 
$$
A^{\otimes n}=\oplus_{|\lambda|=n}S_{\lambda}(A)\quad (S_{\lambda}(A)=\textrm{Im}\,d^{A}_{\lambda}),
$$
where $S_{\lambda}$ is the {\bf isotypic Schur functor} associated to $\lambda$ 
(which is a just ``multiple'' of the classical one).
The $n$-th   symmetric product $\textrm{Sym}^nA$ of $A$ is  then defined to be  the image 
$\mathrm{Im}(d^{A}_{\lambda})$ when $\lambda$ corresponds to the partition $(n)$, and
the  $n$-th exterior power $\wedge^n  A$ is $\mathrm{Im}(d^{A}_{\lambda})$
when $\lambda $ corresponds to the partition $(1,\dots, 1)$.
If $\sC =\sM_\rat(k)$ and $A=\mathfrak{h}(X) \in \sM_\rat(k)$  for  a smooth  projective variety $X$, 
then  $\wedge^n A $ is the image of $\mathfrak{h}(X^n)=\mathfrak{h}(X)^{\otimes n}$ under the projector
$(1/ n!) (\sum_{\sigma \in \Sigma_n} \mathrm{sgn}(\sigma)\Gamma_{\sigma})$, 
while $\textrm{Sym}^n A$ is its image under the projector
$(1/ n!) (\sum_{\sigma \in \Sigma_n}\Gamma_{\sigma})$.
\defn \label{DefFD}
The object $A$ in $\sC$ is said to be {\bf Schur finite} if $S_{\lambda}(A)=0$ for some 
partition $\lambda$ (i.e. $d^{A}_{\lambda}=0$ in $\textrm{End}_{\sC}(A^{\otimes})$); 
it is said to be {\bf evenly (oddly) finite-dimensional} 
if $\wedge^n A=0$ ($\textrm{Sym}^n A=0$) for some $n$.
An object $A$ is {\bf finite-dimensional} (in the sense of Kimura and O'Sullivan) 
if it can be decomposed into a direct sum $A_+\oplus A_-$ where 
$A_+$ is evenly finite-dimensional and 
$A_-$ is oddly finite-dimensional.
\medskip

\noindent
If $A$ is evenly and oddly finite-dimensional then $A=0$ (see [Ki, 6.2] and [An2, 6.2]). 

\begin{rk} 
From the definition it follows that,  for  a smooth projective variety $X$ over $k$,  
the motive $\mathfrak{h}(X)$ is  finite-dimensional in $\sM_{rat}(k)$ if and only if 
$ M(X)$ is finite-dimensional in $DM_{gm}(k)$. 
\end{rk}

Kimura's nilpotence Theorem in [Ki, 7.5] says that if $M$ is finite-di\-men\-sio\-nal, 
any numerically trivial endomorphism {\bf universally of trace zero}
(i.e. given by a correspondence which is numerically trivial as an algebraic cycle) 
of $M$ is nilpotent; therefore 

\begin{thm}\label{TeoConservativity}(Kimura)
If $M$ and $N$ are two finite-dimensional Chow motives and $f\colon M\To N$
is a morphism, then $f$ is an isomorphism if and only if its reduction 
modulo numerical equivalence is such (see [An2 3.16.2)]).
\end{thm}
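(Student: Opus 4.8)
The plan is to derive this from Kimura's nilpotence theorem stated just above. One direction is formal: the reduction functor from $\sM_{rat}(k)$ to the category $\sM_{\num}(k)$ of Chow motives modulo numerical equivalence is a $\Q$-linear functor, so it carries an isomorphism to an isomorphism.

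For the converse, assume that the reduction $\bar f\colon \bar M\To\bar N$ of $f$ is an isomorphism in $\sM_{\num}(k)$, and let $\bar g\colon\bar N\To\bar M$ be its inverse. The reduction functor is full, its morphism groups being quotients of the correspondence groups of $\sM_{rat}(k)$, so we may pick a lift $g\colon N\To M$ of $\bar g$. Then $u:=g\circ f-\mathrm{id}_M$ and $v:=f\circ g-\mathrm{id}_N$ are endomorphisms, of $M$ and of $N$ respectively, whose reductions modulo numerical equivalence vanish; that is, $u$ and $v$ are given by numerically trivial cycles. Since $M$ and $N$ are both finite-dimensional, Kimura's nilpotence theorem applies to each, yielding an integer $r$ with $u^r=0$ and $v^r=0$.

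Consequently $g\circ f=\mathrm{id}_M+u$ is invertible in $\mathrm{End}_{\sM_{rat}(k)}(M)$, with inverse the finite sum $\sum_{j=0}^{r-1}(-u)^j$, and likewise $f\circ g=\mathrm{id}_N+v$ is invertible in $\mathrm{End}_{\sM_{rat}(k)}(N)$. Hence $f$ has the left inverse $(g\circ f)^{-1}\circ g$ and the right inverse $g\circ(f\circ g)^{-1}$, and a two-sided inverse being equal to both, $f$ is an isomorphism. There is no real obstacle beyond invoking [Ki, 7.5]; the only points to keep in mind are the fullness of the reduction functor, which allows the lift of $\bar g$, and the fact that finite-dimensionality is used for \emph{both} $M$ and $N$ — once to make $u$ nilpotent and once for $v$.
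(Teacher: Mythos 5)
Your argument is correct and is precisely the standard deduction from Kimura's nilpotence theorem that the paper relies on: the paper gives no proof of its own, citing [An2, 3.16.2)], and that reference's argument is the same lifting-plus-nilpotence device you use (fullness of the reduction to numerical motives, nilpotence of $g\circ f-\mathrm{id}_M$ and $f\circ g-\mathrm{id}_N$, then invertibility via the finite geometric series). Your emphasis that finite-dimensionality is needed for both $M$ and $N$ is exactly the right point to flag.
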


\noindent
In particular, if $M \in \sM_\rat$ is  a finite-dimensional  motive such that $H^*(M)=0$, 
where $H^{*}$ is any Weil cohomology, 
then $M=0$ ([Ki, 7.3]).

\begin{rk}\label{RemSchurNotKimura}
For Schur-finite objects such a nilpotency result holds 
only under some extra assumptions as shown in [DP-M1] and [DP-M2],
but not in general. In fact  let $\mathcal{C}$ be the $\Q$-linear rigid tensor category of  
bounded chain complexes of finitely generated $\Q$-vector spaces 
with the usual tensor structure and the ``Koszul'' commutativity constraint. 
Then $\textrm{Id}_{\Q}\colon \Q\To\Q$ can be thought of as an object $A$  of $\sC$, 
 concentrated in homological degrees $1$ and $0$.
It is indecomposable as $\textrm{End}_{\sC}(A)\cong \Q$,
and it is not finite-dimensional for $\wedge^{n}(A)\neq 0$ and 
$\textrm{Sym}^{n} (A)\neq 0$ (as complexes) for each $n\in \mathbb{N}$. 
On the other hand $S_{(2,2)}(A)=0$, i.e. $A$ is Schur-finite, 
for it is so under the obvious faithful (but not full) 
$\Q$-linear tensor functor towards $\Z/2$-graded $\Q$-vector spaces. 
Moreover, due to the Koszul rule, $\textrm{Id}_{A}$ is universally of trace zero 
but not nilpotent.
\end{rk}

\begin{exs}
 \item{\bf (1)} 
Finite-dimensionality and Schur-finiteness are  stable under direct sums, tensor products, and direct summand. 
More precisely: $S_{\lambda}(B)=0$ whenever $B$ is a direct summand of $A$ with $S_{\lambda}(A)=0$. 
It is also true that a direct summand of a finite-dimensional object is such ([An2, 3.7]).
Finite-dimensionality implies Schur-finiteness, but the converse does not hold not even in $DM_{gm}(k)$. 
In fact Peter O'Sullivan showed that there exist smooth surfaces $S$ whose motives in $DM_{gm}(k)$ is Schur-finite 
but not finite dimensional, see [Maz, 5.11].
\smallskip

\item{\bf (2)}
Clearly we have $\wedge^{2}\un=0$ in any {\it symmetric} tensor category.
It is also straightforward that $\wedge^{2}\Le =0$ for the Lefschetz motive, and
$\wedge^{3}\mathfrak{h}(\P^{1}) =0$. Kimura showed $\textrm{Sym}^{2g+1}(\mathfrak{h}^{1}(C))=0$
for any smooth projective curve $C$ of genus $g$ [Ki, 4.2].
\end{exs}

We also have Kimura's conjecture:

\begin{conj} 
Any motive in $\sM_\rat$ is finite-dimensional.
\end{conj}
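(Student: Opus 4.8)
The plan cannot honestly be to give a complete argument: this is the Kimura--O'Sullivan conjecture, which is open. What one \emph{can} assemble is the conjecture from the classes of varieties for which finite-dimensionality is known, together with the stability properties recorded above: finite-dimensionality is preserved under direct sums, tensor products and direct summands. So the strategy is to realize the motive of an arbitrary smooth projective $X$ inside the thick rigid tensor subcategory of $\sM_\rat$ generated by a controllable list of ``good'' motives.

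First I would lay down the base cases. For a smooth projective curve $C$ of genus $g$ one has $\mathfrak{h}(C)\simeq\un\oplus\mathfrak{h}^1(C)\oplus\Le$, and Kimura's computation $\textrm{Sym}^{2g+1}(\mathfrak{h}^1(C))=0$ shows $\mathfrak{h}^1(C)$ is oddly finite-dimensional, hence $\mathfrak{h}(C)$ is finite-dimensional. For an abelian variety $A$ of dimension $g$ the Beauville--Deninger--Murre decomposition $\mathfrak{h}(A)=\oplus_{i=0}^{2g}\wedge^i\mathfrak{h}^1(A)$ reduces everything to the oddly finite-dimensional motive $\mathfrak{h}^1(A)$ (Shermenev, Kimura). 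One also disposes of Tate motives, hence of projective homogeneous varieties. Then I would enlarge the list: since finite-dimensional motives form a thick rigid tensor subcategory, it contains the motive of any variety rationally dominated by a product of curves, of blow-ups along smooth centres with finite-dimensional motive, of Fermat hypersurfaces, of Kummer surfaces, of surfaces with $p_g=0$ for which Bloch's conjecture is known (Godeaux, Catanese, Barlow, \dots), and --- relevant to this paper --- of K3 surfaces with a Shioda--Inose structure or with an elliptic fibration of the geometric type used in Section 4.

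The main obstacle is precisely what is \emph{not} on that list: a general surface of general type with $p_g>0$, and more generally the transcendental part $\mathfrak{h}^2_{tr}(X)$ of any variety with non-representable $CH_0$. For such $X$ there is at present no geometric construction exhibiting $\mathfrak{h}(X)$ as a summand of a tensor expression in curves and abelian varieties, and by the nilpotence theorem (Theorem~\ref{TeoConservativity}) a proof of finite-dimensionality would in particular force a Bloch-type statement --- that rational and numerical equivalence agree, up to nilpotents, on these transcendental cycles --- which is itself wide open. Accordingly I do not expect to close the argument; the realistic content of the plan is the reduction, via the standard Chow--K\"unneth decomposition in dimension $\le 2$ together with the stability properties above, of the whole conjecture to the single case of the transcendental motive $\mathfrak{h}^2_{tr}(S)$ of a surface $S$.
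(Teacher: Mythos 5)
You are right that this statement is the Kimura--O'Sullivan conjecture and admits no proof in the paper: it is stated as an open conjecture, and the paper's only accompanying content is the Remark surveying the known cases (curves, abelian varieties, Kummer surfaces, surfaces not of general type with $p_g=0$, Fano 3-folds, special K3 surfaces) and the stability properties under sums, tensor products and summands. Your survey and your reduction of the problem to the transcendental part $\mathfrak{t}_2(S)$ of a surface match the paper's own discussion, so there is nothing to close and no gap to report beyond the openness of the conjecture itself.
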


\begin{rk}
The status of the conjecture is the following.
\smallskip

\item{\bf (1)}
The conjecture is true for curves, abelian varieties, Kummer surfaces,  complex  surfaces not of general type with $p_g =0$ 
(e.g. Enriques surfaces), Fano 3-folds [G-G].  
For a  complex surface $X$ of general type with $p_g(X)=0$ the finite-dimensionality of the motive $\mathfrak{h}(X)$ 
is equivalent to Bloch's conjecture, i.e. to the vanishing of the Albanese Kernel of $X$ (see [G-P, Th. 7]).
If the conjecture holds for $\mathfrak{h}(X)$ then it holds true for $\mathfrak{h}(Y)$ with $Y$ a smooth projective variety dominated by $X$. 
The full subcategory of $\sM_\rat$ on finite-dimensional objects is a $\Q$-linear rigid tensor subcategory closed under direct summand.
\smallskip

\item{\bf (2)} Let $X$ be a K3 surface; then   $\mathfrak{h}(X)$ is finite-dimensional in the following cases, see [Pe3]
\begin{itemize}
\item
$\rho(X)=19$ or $\rho(X)=20$. In these cases $X$ has a {\it Nikulin involution} which gives a {\it Shioda-Inose structure},
 in the sense of [Mo, 6.1], and the transcendental motive $\mathfrak{t}_{2}(X)$ of $X$ (see \ref{rCK}) is isomorphic to the transcendental motive of a {\it Kummer
surface} [Pe3, Th. 4]. 
\smallskip

\item 
$X$ has a non-symplectic group $G$ acting trivially on the algebraic cycles and the order of the 
kernel (a finite group) of the map $\textrm{Aut}(X)\To \mathcal{O}(\textrm{NS}(X))$ is different from $3$, where 
$ \mathcal{O}(\textrm{NS}(X))$ is the group of isometries of $NS(X)$. 
Then, by a result in [L-S-Y, Th. 5],  $X$ is dominated by a {\it Fermat surface} $F_{n}$, whose motive 
is of abelian type (hence finite-dimensional) by the {\it Shioda-Katsura inductive structure} [S-K, Th. I].
K3 surfaces satisfying these conditions have  $\rho(X)=2,4,6,10,12,16,18,20$.
\end{itemize}
\smallskip

By a result of Deligne ([De, 6.4]),  for every complex polarized K3 surface there exists a smooth family of polarized K3 surfaces 
$\{X\}_{t \in \Delta}$, with $\Delta$ the unit disk, such that the central fibre $X_0$ is isomorphic to $X$. 
Therefore the finite-dimensionality of the motive of a general K3 surface, i.e. with $\rho(X)=1$, implies the finite-dimensionality 
of the motive of any K3 surface, see [Pe1, 4.3].
\smallskip

\item{\bf (3)}
In all the known cases where the motive $\mathfrak{h}(X)$ is finite-dimensional,  it lies in the tensor subcategory of 
$\sM_{rat}(k)$ generated by the motives of abelian varieties (see [An, 2.5]).
\end{rk}

The following result will appear in [DP].
\begin{prop}\label{PropFDStability}
Let $M=(X,p)$ be an effective Chow motive. Then
\item{\bf (a)} 
The (graded) motive $M$ is Schur-finite if and only if the ungraded motive $M_{\textrm{un}}$
is such. More precisely  for any partition $\lambda$ we have $S^{\sM_{rat}(k)}_{\lambda}(M)=0$ if and only if 
$S^{\mathcal{UM}_{rat}(k)}_{\lambda}(M_{\textrm{un}})=0$.
In particular, being $M$ even or odd depends only on the ungraded isomorphism
class of the ungraded motive $M_{\textrm{un}}$.
\smallskip

\item{\bf (b)} 
If $M$ is finite-dimensional in $\sM_{rat}(k)$ then $M_{\textrm{un}}$ is so in $\mathcal{UM}_{rat}(k)$.
Moreover, if $M=\mathfrak{h}(X)$ with $X$ a variety such that the projections on the even and the odd part 
of the cohomology (w.r.t. a given Weil cohomology theory) 
are algebraic then $\mathfrak{h}(X)$ is finite-dimensional if and only if $\mathfrak{h}(X)_{\textrm{un}}$ is.
\end{prop}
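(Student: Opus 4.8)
The plan is to push everything through the \emph{forgetful} functor $u\colon \sM^{eff}_{rat}(k)\To \mathcal{UM}_{rat}(k)$, which is the identity on objects — a degree-$0$ projector $p\in CH_{\dim X}(X\times X)_\Q$ is simply viewed inside $CH_{*}(X\times X)_\Q$ — and which on morphisms is the inclusion $q\circ CH_{\dim X}(X\times Y)_\Q\circ p\hookrightarrow q\circ CH_{*}(X\times Y)_\Q\circ p$. Composition of correspondences, tensor product, and the commutativity constraint (the unsigned transposition graph) are given by the same formulas in both categories, so $u$ is a faithful $\Q$-linear symmetric monoidal functor, with $u(M)=M_{\textrm{un}}$. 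This already settles (a): being $\Q$-linear and monoidal, $u$ sends the idempotent $d^{M}_{\lambda}\in\textrm{End}(M^{\otimes n})$ — a $\Q$-combination of the morphisms $\sigma_{M^{\otimes n}}$ — to $d^{M_{\textrm{un}}}_{\lambda}$. Since the image of an idempotent in a pseudoabelian category vanishes exactly when the idempotent does, and since $u$ is faithful,
\[
S^{\sM_{rat}(k)}_{\lambda}(M)=0 \;\Longleftrightarrow\; d^{M}_{\lambda}=0 \;\Longleftrightarrow\; d^{M_{\textrm{un}}}_{\lambda}=0 \;\Longleftrightarrow\; S^{\mathcal{UM}_{rat}(k)}_{\lambda}(M_{\textrm{un}})=0.
\]
Specialising $\lambda$ to $(1,\dots,1)$ and to $(n)$ gives the statements on $\wedge^{n}$ and $\textrm{Sym}^{n}$, and the dependence only on the ungraded isomorphism class follows because $\wedge^{n}$ and $\textrm{Sym}^{n}$ are functors. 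The first assertion of (b) — hence also the implication ``$\Rightarrow$'' in the last one — follows at once: if $M=M_{+}\oplus M_{-}$ with $M_{\pm}$ effective, $M_{+}$ evenly and $M_{-}$ oddly finite-dimensional, then $M_{\textrm{un}}=u(M_{+})\oplus u(M_{-})$ is again of this type by (a).

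For the converse in the last assertion of (b) I would argue in $\mathcal{UM}_{rat}(k)$ and transport back by (a). The hypothesis provides cycles $\pi^{\pm}\in CH_{\dim X}(X\times X)_\Q$ representing the even and odd Künneth projectors, hence a decomposition $\mathfrak{h}(X)=\mathfrak{h}^{+}(X)\oplus\mathfrak{h}^{-}(X)$ in $\sM^{eff}_{rat}(k)$ with $H^{*}(\mathfrak{h}^{+}(X))=H^{\textrm{even}}(X)$ and $H^{*}(\mathfrak{h}^{-}(X))=H^{\textrm{odd}}(X)$ for a fixed Weil cohomology $H^{*}$. Assume $\mathfrak{h}(X)_{\textrm{un}}$ is finite-dimensional; being direct summands of it, the motives $\mathfrak{h}^{\pm}(X)_{\textrm{un}}$ are finite-dimensional (finite-dimensionality is stable under direct summands by [An2, 3.7], whose proof uses only Kimura's nilpotence and so carries over to $\mathcal{UM}_{rat}(k)$). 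By (a) it then suffices to prove that $\mathfrak{h}^{+}(X)_{\textrm{un}}$ is evenly, and $\mathfrak{h}^{-}(X)_{\textrm{un}}$ oddly, finite-dimensional. For this I use a cohomological realisation for \emph{ungraded} motives: a correspondence $\gamma\in CH_{*}(X\times Y)_\Q$ acts on $H^{*}$ by an operator shifting cohomological degree by an even integer, hence preserving the parity grading, while the transposition graph induces on $H^{*}(X)\otimes H^{*}(X)$ precisely the Koszul-signed symmetry; therefore $H^{*}$ factors through $u$ as a $\Q$-linear symmetric monoidal functor $\bar{H}^{*}$ from $\mathcal{UM}_{rat}(k)$ to the category of (finite-dimensional) super $\Q$-vector spaces, with $\bar{H}^{*}\circ u=H^{*}$. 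Writing $\mathfrak{h}^{+}(X)_{\textrm{un}}=N_{+}\oplus N_{-}$ with $N_{+}$ evenly and $N_{-}$ oddly finite-dimensional, the relation $\textrm{Sym}^{m}(N_{-})=0$ forces the even part of $\bar{H}^{*}(N_{-})$ to vanish; but $\bar{H}^{*}(N_{-})$ is a direct summand of $\bar{H}^{*}(\mathfrak{h}^{+}(X)_{\textrm{un}})=H^{\textrm{even}}(X)$, which is purely even, so $\bar{H}^{*}(N_{-})=0$. Finally, the categorical form of Kimura's nilpotence theorem (valid in any $\Q$-linear pseudoabelian symmetric monoidal category, cf. [Ki, 7.5] and [An2]) applies in $\mathcal{UM}_{rat}(k)$ with $\bar{H}^{*}$ in place of a Weil cohomology: $\bar{H}^{*}$ preserves traces, so $\bar{H}^{*}(f)=0$ makes all trace-powers of $f$ vanish, whence any such endomorphism $f$ of a finite-dimensional object is nilpotent; applied to $\textrm{id}_{N_{-}}$ this gives $N_{-}=0$. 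Symmetrically, $\mathfrak{h}^{-}(X)_{\textrm{un}}$ is oddly finite-dimensional. Transporting both conclusions back by (a), $\mathfrak{h}^{+}(X)$ is evenly and $\mathfrak{h}^{-}(X)$ oddly finite-dimensional in $\sM^{eff}_{rat}(k)$, so $\mathfrak{h}(X)=\mathfrak{h}^{+}(X)\oplus\mathfrak{h}^{-}(X)$ is finite-dimensional.

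The point requiring care is precisely this last step: one must check that $\bar{H}^{*}$ really is a well-defined symmetric monoidal functor on the \emph{ungraded} category — the crux being that an arbitrary correspondence, of whatever codimension, shifts cohomological degree by an even amount and hence respects parity — and that Kimura's nilpotence, together with the conservativity of $\bar{H}^{*}$ on finite-dimensional objects, goes through verbatim in $\mathcal{UM}_{rat}(k)$, since there one is outside the graded framework in which [Ki] is phrased. Everything else is formal bookkeeping with the functor $u$.
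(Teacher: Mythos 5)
The paper itself gives no proof of this proposition (it is deferred to the reference [DP], ``in preparation''), so there is nothing internal to compare against; I can only assess your argument on its own terms. Part {\bf (a)}, the first assertion of {\bf (b)}, and the overall strategy for the converse (realize the ungraded category in super vector spaces, use that arbitrary correspondences shift cohomological degree by an even amount, and invoke the categorical Kimura nilpotence in $\mathcal{UM}_{rat}(k)$) are all sound; the points you flag as ``requiring care'' do go through, since the nilpotence theorem and the stability of finite-dimensionality under direct summands are purely categorical statements valid in any $\Q$-linear pseudoabelian rigid symmetric tensor category with $\mathrm{End}(\un)=\Q$.

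The one genuine gap is the sentence ``the hypothesis provides cycles $\pi^{\pm}\in CH_{\dim X}(X\times X)_{\Q}$ representing the even and odd K\"unneth projectors, \emph{hence} a decomposition $\mathfrak{h}(X)=\mathfrak{h}^{+}(X)\oplus\mathfrak{h}^{-}(X)$ in $\sM^{eff}_{rat}(k)$.'' The sign conjecture $C^{+}(X)$ only asserts that the even K\"unneth projector is the cohomology class of some algebraic cycle $c\in CH_{\dim X}(X\times X)_{\Q}$; it does not assert that $c$ is an idempotent correspondence, only that $c^{2}-c$ is homologically trivial. So the direct sum decomposition of the Chow motive does not come for free, and producing it is in fact the crux of passing from ungraded to graded finite-dimensionality. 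The repair uses exactly the hypothesis you have: since $\mathfrak{h}(X)_{\textrm{un}}$ is finite-dimensional, every homologically (hence numerically) trivial endomorphism in $CH_{*}(X\times X)_{\Q}$ is nilpotent; the element $c^{2}-c$ lies in $CH_{\dim X}(X\times X)_{\Q}$, which is closed under composition, so it is nilpotent there as well. Then $\Q[c]\subset CH_{\dim X}(X\times X)_{\Q}$ is a finite-dimensional commutative $\Q$-algebra in which $c$ is idempotent modulo the nilradical, so $c$ can be corrected (by a polynomial in $c$) to a genuine degree-zero idempotent $e$ with the same cohomology class, and $\mathfrak{h}^{+}(X)=(X,e)$, $\mathfrak{h}^{-}(X)=(X,\Delta_{X}-e)$ gives the decomposition you need. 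With that step inserted, the rest of your argument (showing $N_{-}=0$ via $\bar{H}^{*}$ and nilpotence, and transporting back by (a)) closes the proof. You should make this idempotent-lifting step explicit rather than folding it into ``hence''.
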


\begin{rk}
The hypothesis in {\bf (b)} of Proposition \ref{PropFDStability} is Jannsen's 
{\it homological sign conjecture} $C^{+}(X)$ [An2, 5.1.3],
called $S(X)$ in [Ja, 13.3].
\end{rk}

\subsection{The refined Chow-K\"unneth decomposition}\label{rCK}
Let for simplicity $k=\C$ in what follows.
We recall from [K-M-P,  2.1] that the covariant Chow motive  $\mathfrak{h}(S) \in \sM_{rat} (\C)_{\Q}$
of any  smooth projective surface $S$  has a  {\bf refined Chow-K\"unneth decomposition}
$$
\sum_{0 \le i \le 4}\mathfrak{h}_i(S)
$$
corresponding to a splitting $\Delta_S =\sum_{0 \le i \le 4 }\pi_i$ of the diagonal in $H^*(S \times S)$. 
Here $\mathfrak{h}_0(S)=(S,[S \times P])\simeq (\textrm{Spec}(\C),\textrm{Id}) = \un$ and 
$ \mathfrak{h}_4(S) =(S,[P \times S]) \simeq \mathbb{L}^2$,
where $P$ is a rational  point  on $S$.
Also 
$$
\mathfrak{h}_2(S) = {\mathfrak{h}_{2}}^\textrm{alg}(S) \oplus \mathfrak{t}_2(S)
$$
with 
${\mathfrak{h}_2}^\textrm{alg}(S) = (S,\pi^{\textrm{alg}}_2)$ 
the effective Chow motive defined by the idempotent
$$
\pi^{\textrm{alg}}_2(S) =\sum_{1 \le h \le \rho} \frac {[D_h \times D_h]}{D^2_h}\in A_{2}(S\times S)
$$
where $\rho=\rho(S)$ is the rank of the Neron-Severi $\textrm{NS}(S)$ 
and $\{D_h\}$ is an orthogonal bases of $\textrm{NS}(S)_{\Q}$.
It follows that ${\mathfrak{h}_2}^{\textrm{alg}}(S) \simeq \mathbb{L}^{\oplus \rho}$. 

\defn \label{DefTP}
The Chow motive  $\mathfrak{t}_2(S)= (S,\pi^{\textrm{tr}}_2,0)$, 
with $\pi^{\textrm{tr}}_2 =\pi_2 -\pi^{\textrm{alg}}_2$, 
is  the {\bf transcendental  part} of the motive $\mathfrak{h}(S)$. 
Then $H^i(\mathfrak{t}_2(S))=0$ if $i \ne 2$ and 
$ H^2(\mathfrak{t}_2(S)) =H^2_{\textrm{tr}}(S) =\pi^{\textrm{tr}}_2 H^2(S,\Q) = H^2_{\textrm{tr}}(S,\Q)$. 
\medskip

The Chow motive $\mathfrak{t}_{2}(S)$ does not depend on the choices made to define the refined Chow-K\"unneth decomposition,
it is functorial on $S$ for the action of correspondences, and it is a {\it birational invariant} of $S$ (see [K-M-P]).

\begin{rk}
For any smooth projective surface $S$, all the motives $\mathfrak{h}_i(S)$ appearing in a refined  Chow-K\"unneth decomposition, 
except possibly  for $\mathfrak{t}_2(S)$ are finite dimensional.
Therefore the motive $\mathfrak{h}(S)$  of a surface $S$ is finite dimensional if and only if 
the motive $\mathfrak{t}_2(S)$ is evenly finite dimensional, i.e. $\wedge^n \mathfrak{t}_2(S) =0$ for some $n$.
If $S$ has no irregularity (i.e. $q(S):=\dim\, H^{1}(S,\mathcal{O}_{S})=0$) then $\mathfrak{h}_{1}(S)=\mathfrak{h}_{3}(S)=0$.
\end{rk}

\subsection{Refined C-K decomposition of a K3 surface}\label{SubSecRCK}
Let now $S$ be a smooth (irreducible) projective  K3 surface  over $\C$.
As $S$ is a regular surface (i.e. $q(S)=0$),  
its  refined Chow-K\"unneth decomposition  has the following shape
$$
\mathfrak{h}(S)
\;\;=\;\;
\un
\oplus\; 
{\mathfrak{h}_{2}}^\textrm{alg}(S) \oplus \mathfrak{t}_2(S)
\;\oplus 
\mathbb{L}^{\otimes 2}
\;\;\simeq\; \;
\un\oplus\; 
\mathbb{L}^{\oplus \rho }\oplus \mathfrak{t}_2(S)
\;\oplus 
\mathbb{L}^{\otimes 2}
$$
with  $1 \le  \rho  \le 20$.
Moreover   
$$
A_i(\mathfrak{t}_2(S))=\pi^{\textrm{tr}}_2 A_i(S)=0    \textrm{  for  }    i \ne 0;
\qquad   A_0(\mathfrak{t}_2(S)) =A_0(S)_0,
$$    
where the last $\Q$-vector space is the group of $0$-cycles of degree $0$ tensored with $\Q$. 
We also have 
$$ 
\dim H^2(S) = b_2(S) = 22; \qquad  \dim H^2_{\textrm{tr}}(S) = b_2(S)  - \rho(S)=22-\rho.  
$$
By $T_{S,\Q}=H^{2}_{\textrm{tr}}(S,\Q)$ we  will denote the {\it lattice of transcendental cycles}, tensored with $\Q$,  
it coincides with the orthogonal complement  to the Neron-Severi $\textrm{NS}(S)\otimes \Q$ in $H^2(S,\Q)$.

\section{Derived equivalence and motives}
Let $X$ and $Y$ be smooth projective varieties over $\C$.
If $X$ and $Y$ are derived equivalent then 
(see e.g. [Ro], [Hu], [B-B-HR])
$\dim\, X=\dim\, Y$, $\kappa(X)=\kappa(Y)$ (where $\kappa$ is the Kodaira dimension), and
$H^{*}(X,\Q)\simeq H^{*}(Y,\Q)$ (isomorphism of $\Z/2$-graded vector spaces).
If $\dim\, X=2$ the surfaces $X$ and $Y$ have the same Picard number and 
the same topological Euler number; 
and $X$ is a K3 surface, respectively an abelian surface, if and only if $Y$ is.

Kawamata conjectured that, up to isomorphism, 
$X$ has only a finite number of Fourier-Mukai partners $Z$ [Ka].
This conjecture is true for curves (and in this case $Z\simeq X$, [B-B-HR, 7.16]),
surfaces ([B-M]), abelian varieties (see [Ro, 3]  and [H-NW, 0.4]), and
varieties with ample or antiample canonical bundle, in which case $Z\simeq X$ 
(due to Bondal-Orlov, see [B-B-HR, 2.51]).
\medskip

The following result is somewhat in the same spirit, with respect to the relation between 
derived equivalence of smooth projective varieties and their associated Chow motives.

\begin{prop}\label{PropFDStabilityDer}
Let $\Phi_{\mathcal{A}}\colon D^{b}(X)\To D^{b}(Y)$ an exact equivalence, then 

\item{\bf (a)} The ungraded Chow motives $\mathfrak{h}(X)_{\textrm{un}}$ and $\mathfrak{h}(Y)_{\textrm{un}}$ are isomorphic.
If the condition $(*)$ in Theorem \ref{TeoOrDer} is satisfied then the isomorphism is given by a correspondence of degree zero, hence 
$\mathfrak{h}(X)$ and $\mathfrak{h}(Y)$ are isomorphic as Chow motives.
\smallskip

\item{\bf (b)}  The (graded) motive $\mathfrak{h}(X)$ is Schur-finite if and only if $\mathfrak{h}(Y)$ is such.
\smallskip

\item{\bf (c)}  If $X$ is curve, a surface, an abelian variety, or a finite product of them 
(or any variety if $k$ is algebraic over a finite field), then 
$\mathfrak{h}(X)$ is finite-dimensional if and only if $\mathfrak{h}(Y)$ is such.
\end{prop}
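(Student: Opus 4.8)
The plan is to derive all three parts from a single construction, the \emph{motivic realization} of a Fourier--Mukai kernel. For $\mathcal{E}\in D^{b}(X\times Y)$ put $v(\mathcal{E})=\operatorname{ch}(\mathcal{E})\cdot\sqrt{\operatorname{td}_{X\times Y}}\in CH_{*}(X\times Y)_{\Q}$; being a correspondence of mixed codimension, this is a morphism $\mathfrak{h}(X)_{\textrm{un}}\To\mathfrak{h}(Y)_{\textrm{un}}$ in $\mathcal{UM}_{rat}(\C)$. The two facts I would extract from Grothendieck--Riemann--Roch --- these being essentially the heart of Orlov's argument in [Or2], cf.\ also [Hu, Ch.~5] for the cohomological shadow --- are: (i) $v(\mathcal{O}_{\Delta_{X}})=[\Delta_{X}]$, i.e.\ the identity kernel realizes to the identity of $\mathfrak{h}(X)_{\textrm{un}}$; and (ii) for $\mathcal{A}\in D^{b}(X\times Y)$ and $\mathcal{B}\in D^{b}(Y\times Z)$ one has $v(\mathcal{B}*\mathcal{A})=v(\mathcal{B})\circ v(\mathcal{A})$, where $*$ denotes convolution of kernels and $\circ$ composition of (ungraded) correspondences.

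For \textbf{(a)}: since $\Phi_{\mathcal{A}}$ is an equivalence, by Orlov's representability and uniqueness result [Or1, Th.~2.19] its quasi-inverse is again a Fourier--Mukai transform $\Phi_{\mathcal{B}}$; as the composition of two such transforms is the Fourier--Mukai transform with kernel the convolution, uniqueness of kernels gives $\mathcal{B}*\mathcal{A}\simeq\mathcal{O}_{\Delta_{X}}$ and $\mathcal{A}*\mathcal{B}\simeq\mathcal{O}_{\Delta_{Y}}$. Applying $v$ and using (i)--(ii) yields $v(\mathcal{B})\circ v(\mathcal{A})=[\Delta_{X}]$ and $v(\mathcal{A})\circ v(\mathcal{B})=[\Delta_{Y}]$, so $v(\mathcal{A})$ defines an isomorphism $\mathfrak{h}(X)_{\textrm{un}}\iso\mathfrak{h}(Y)_{\textrm{un}}$ in $\mathcal{UM}_{rat}(\C)$ with inverse $v(\mathcal{B})$. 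If moreover $(*)$ holds, the support of $\mathcal{A}$ has dimension $n=\dim X$, and then --- which is precisely the statement of Theorem \ref{TeoOrDer} --- the codimension-$n$ components of $v(\mathcal{A})$ and $v(\mathcal{B})$ are already mutually inverse morphisms of Chow motives, so $\mathfrak{h}(X)\simeq\mathfrak{h}(Y)$ in $\sM_{rat}(\C)$ through a degree-zero correspondence.

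Parts \textbf{(b)} and \textbf{(c)} then follow by transporting finiteness across the ungraded isomorphism of (a) by means of Proposition \ref{PropFDStability}. For (b): $\mathfrak{h}(X)$ is Schur-finite iff $\mathfrak{h}(X)_{\textrm{un}}$ is (Proposition \ref{PropFDStability}(a)), iff $\mathfrak{h}(Y)_{\textrm{un}}$ is --- Schur-finiteness depending only on the isomorphism class, as each $S_{\lambda}$ is a functor --- iff $\mathfrak{h}(Y)$ is. For (c): if $\mathfrak{h}(X)$ is finite-dimensional then so is $\mathfrak{h}(X)_{\textrm{un}}$, hence so is $\mathfrak{h}(Y)_{\textrm{un}}$ ([An2, 3.7]); to descend back to $\mathfrak{h}(Y)$ one uses the second assertion of Proposition \ref{PropFDStability}(b), which needs the homological sign conjecture $C^{+}(Y)$, and symmetrically $C^{+}(X)$. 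Now $C^{+}(X)$ holds for curves, surfaces and abelian varieties --- for surfaces because all K\"unneth components of the diagonal are classically algebraic, for abelian varieties by Deninger--Murre --- and is stable under finite products; and $C^{+}(Y)$ follows either from the fact that a Fourier--Mukai partner of a curve (resp.\ surface, abelian variety) is again one (as recalled in \S3), or, uniformly, by transport along (a): conjugating the even K\"unneth projector of $X$ by $v(\mathcal{A})$ gives an algebraic idempotent on $Y$ which realizes to the projector onto $H^{\mathrm{even}}(Y)$, since the cohomological Fourier--Mukai transform preserves the $\Z/2$-grading on $H^{*}$. Over a field algebraic over a finite field, $C^{+}$ is unconditional for every smooth projective variety by Katz--Messing.

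I expect the main obstacle to be the justification of (i) and (ii) in step (a): one must check, at the level of Chow groups and not merely of cohomology, that the $\sqrt{\operatorname{td}}$-twisted Chern characters of kernels multiply under convolution exactly as correspondences compose, and that the identity kernel realizes to the class of the diagonal --- only then do $v(\mathcal{A})$ and $v(\mathcal{B})$ qualify as mutually inverse morphisms of ungraded Chow motives. A minor secondary point is the codimension bookkeeping needed to make the transport of $C^{+}$ to $Y$ fully precise in part (c); beyond these, everything is formal given Proposition \ref{PropFDStability} and Theorem \ref{TeoOrDer}.
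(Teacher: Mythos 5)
Your proposal is correct and follows essentially the same route as the paper: the ungraded isomorphism is Orlov's $\sqrt{\operatorname{td}}$-twisted Chern character correspondence $v^{CH}(\mathcal{A})$ with inverse $v^{CH}(\mathcal{B})$ via Grothendieck--Riemann--Roch, the middle (degree-zero) components giving the graded isomorphism under $(*)$, and parts (b), (c) are read off from Proposition \ref{PropFDStability} exactly as in the paper. Your only addition is the explicit justification of $C^{+}(Y)$ in (c) (via FM partners staying in the same class, or transport of the even K\"unneth projector), a point the paper leaves implicit.
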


\begin{proof}
{\bf (a)} 
The argument in [Or 1, p. 1243], which we briefly recall can be used to prove that 
$\mathfrak{h}(X)_{\textrm{un}}\cong \mathfrak{h}(Y)_{\textrm{un}}$  in $\mathcal{UM}_{rat}(k)$.
Let  $\mathcal{B}\in D^{b}(X\times Y)$ be the kernel of the  quasi-inverse of 
$\Phi_{\mathcal{A}}$. 
Using Huybrechts' notation ([Hu1, p. 1534] and [Hu2, 4.1]),
we then have (non homogeneus, $\Q$-linear) algebraic cycles
$$
a=v^{\textrm{CH}}(\mathcal{A}):=
\textrm{ch}(\mathcal{A})\cdot\sqrt{\textrm{td}_{X\times Y}}=
p^{*}_{1}\left(\sqrt{\textrm{td}_{X}}\,\right)\cdot 
\textrm{ch}(\mathcal{A})\cdot 
p^{*}_{2}\left(\sqrt{\textrm{td}_{Y}}\,\right)
\in CH_{*}(X\times Y)_{\Q},
$$
and
$$
b=v^{\textrm{CH}}(\mathcal{B})=
p^{*}_{1}\left(\sqrt{\textrm{td}_{Y}}\,\right)\cdot 
\textrm{ch}(\mathcal{B})\cdot 
p^{*}_{2}\left(\sqrt{\textrm{td}_{X}}\,\right)
\in CH_{*}(Y\times X)_{\Q},
$$
where $\textrm{td}$ is the Todd class and $\textrm{ch}\colon D^{b}(Z)\To CH_{*}(Z)_{\Q}$
is the composition of the Chern character with the Euler characteristic 
$\chi(\mathcal{E})=\sum (-1)^{i}[\mathcal{H}^{i}(\mathcal{E})]\in K_{0}(Z)$ of the complex of sheaves $\mathcal{E}$. 
Orlov proved, by Grothendieck-Riemann-Roch, that 
$$
b\circ a=[\Delta_{X}]=\textrm{Id}_{\mathfrak{h}(X)_{\textrm{un}}},
\quad\textrm{ and }\quad 
a\circ b=[\Delta_{Y}]=\textrm{Id}_{\mathfrak{h}(Y)_{\textrm{un}}}
$$
as (ungraded) correspondences. 

In case the kernel $\mathcal{A}$ satisfies the hypothesis $(*)$ of Theorem \ref{TeoOrDer}, 
that is $\dim \textrm{supp}(\mathcal{A})=\dim\, X$, it turns out that the ``middle components'' 
$a_{d}\in CH_{d}(X\times Y)_{\Q}$ and $b_{d}\in CH_{d}(Y\times X)_{\Q}$ of the above cycles 
$a$ and $b$ (which are correspondences of degree zero) give an isomorphism at the level of usual Chow motives.
\smallskip

{\bf (b)} 
As already observed in Proposition \ref{PropFDStability}, being Schur-finite for a graded motive $M$ 
can be tested on $M_{\textrm{un}}$.
\smallskip

{\bf (c)} 
In all these cases $C^{+}(X)$ holds true, hence Proposition \ref{PropFDStability} {\bf (b)} applies. 
\end{proof}

\begin{ex}
Let $X =A$ be an abelian variety, $Y=\widehat{A}$  its dual and let 
$\mathcal{A}=\mathcal{P}_{A}\in\textrm{Pic}(A\times \widehat{A})$
be the sheaf complex given by the Poincar\'e bundle.  
The corresponding isomorphism of ungraded Chow motives
is given by
$$
\textrm{ch}(\mathcal{P}_{A})\colon
h(A)_{\textrm{un}}\To h(\widehat{A})_{\textrm{un}}
$$
because the Todd classes are $1$ for abelian varieties. 
It can be shown   (see [B-L 16.3]) that it coincides with the 
{\it motivic Fourier-Mukai transform} of  Deninger and Murre ([D-M, 2.9]).
We note that in this case the dimension of the support of $\mathcal{A}$ 
is equal to $\dim (A\times \widehat{A})=2\cdot\dim\, A$.
As $A$ and $\widehat{A}$ are isogenus it follows that their Chow motives (with $\Q$-coefficients) 
are isomorphic (see for example [An1, 4.3.3]). 
\end{ex}

\begin{rks}\label{RemOrHp(*)}
Let us make two comments on Orlov's hypothesis $(*)$, that is ``the dimension of the support of the kernel 
$\mathcal{A}$ of the equivalence $D^{b}(X)\simeq D^{b}(Y)$ equals $\dim\, X$''.
\smallskip

\item{\bf(1)}
If $\Phi_{\mathcal{A}}$ is an equivalence then the natural projections 
$$\textrm{supp}(\mathcal{A})\To X,\quad
\textrm{supp}(\mathcal{A})\To Y
$$ 
are surjective [Hu, 6.4]. Therefore, in general, 
$\dim \textrm{supp}(\mathcal{A})\geq\dim\, X$
whenever $\Phi_{\mathcal{A}}$ is an equivalence.
\smallskip

\item{\bf (2)}
If $\Phi_{\mathcal{A}}$ is an equivalence and Orlov's hypothesis $(*)$ holds true then 
$X$ and $Y$ are {\bf K-equivalent}, a notion due to Kawamata [Ka] (see [B-B-HR, 2.48]).
In case $X$ and $Y$ are smooth projective complex {\it surfaces},
they are K-equivalent if and only if they are isomorphic [B-B-HR, 7.19].
This is, in general, not the case for K3 surfaces, see for example [So].
\end{rks}

In connection with the result in [Or2, Th. 1] Orlov made the following more precise conjecture [Or2, Conj. 2]:

\begin{conj}\label{Conj2Or}
Let  $\mathcal{A}$ be an object on $X\times Y$ for which $\Phi_{\sA} \colon D^b(X) \To D^b(Y)$ is an equivalence. 
Then there are line bundles $L$ and $M$ on $X$ and $Y$, respectively, such that the $\dim\, X$ component 
of the cycle associated to $\mathcal{A}^{\prime}:=p_{1}^{*}L\otimes \sA \otimes p_{2}^{*}M$ 
determines an isomorphism between the motives $M(X)_{\Q}$ and $M(Y)_{\Q}$ in $DM_{gm}(\C)_{\Q}$. 
\end{conj}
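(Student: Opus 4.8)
The plan is to reduce Conjecture \ref{Conj2Or} to two separate ingredients: a \emph{cohomological normalization}, to be settled by the theory of the cohomological Fourier--Mukai transform, and the \emph{finite-dimensionality} of $\mathfrak{h}(X)$, which allows one to promote a cohomological isomorphism to a motivic one via Kimura's conservativity (Theorem \ref{TeoConservativity}). Proposition \ref{PropFDStabilityDer}(a) already supplies the isomorphism of ungraded motives $\mathfrak{h}(X)_{\textrm{un}}\iso\mathfrak{h}(Y)_{\textrm{un}}$ realized by the full non-homogeneous cycle $a=v^{\textrm{CH}}(\sA)$, with inverse $b=v^{\textrm{CH}}(\sB)$; the content of the conjecture is that, after a line-bundle twist, the single homogeneous piece in $CH_n(X\times Y)_{\Q}$ already realizes an isomorphism at the \emph{graded} level. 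Here and below $n=\dim X=\dim Y$.

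First I would record the effect of the twist on the homogeneous components. Writing $a=\sum_i a_i$ with $a_i\in CH_i(X\times Y)_{\Q}$ and using $\textrm{ch}(p_1^{*}L)=p_1^{*}e^{c_1(L)}$ for a line bundle, one gets
\[
v^{\textrm{CH}}(\sA')=p_1^{*}e^{c_1(L)}\cdot v^{\textrm{CH}}(\sA)\cdot p_2^{*}e^{c_1(M)},
\]
so that the degree-zero correspondence, i.e.\ the $\dim X$-dimensional component, becomes
\[
a_n'=a_n+\sum_{k+l>0}\tfrac{1}{k!\,l!}\,p_1^{*}c_1(L)^{k}\cdot a_{n+k+l}\cdot p_2^{*}c_1(M)^{l}.
\]
Thus the twist feeds the \emph{higher-dimensional} components $a_{n+1},a_{n+2},\dots$ into the middle piece while keeping $a_n$ as leading term. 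Since twisting by line bundles cannot shrink $\textrm{supp}(\sA)$, one cannot hope to force Orlov's hypothesis $(*)$ (which forces $a_{>n}=0$ and was exactly what made $a_n$ an isomorphism in Theorem \ref{TeoOrDer}); the goal is instead to tune $L,M$ so that the corrected $a_n'$ becomes invertible despite the presence of the $a_{>n}$.

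Next I would pass to the cohomological realization, where the twist acts by
\[
v^{H}(\sA')=\big(e^{c_1(M)}\cup(-)\big)\circ v^{H}(\sA)\circ\big(e^{c_1(L)}\cup(-)\big),
\]
the two cup operators being unipotent and strictly degree-raising. Decomposing $v^{H}(\sA)=\sum_r v^{H}_r$ with $v^{H}_r$ raising cohomological degree by $2r$ (so $v^{H}_0$ realizes $a_n$ and $v^{H}_{-s}$ realizes $a_{n+s}$), the degree-preserving part of $v^{H}(\sA')$ is $v^{H}_0$ plus corrections pairing the degree-lowering parts $v^{H}_{<0}$ with the raising twists. The invertibility of each block $H^{j}(X,\Q)\To H^{j}(Y,\Q)$ of this degree-preserving part is then a polynomial condition on $(c_1(L),c_1(M))\in\NS(X)_{\Q}\times\NS(Y)_{\Q}$, so the key reduction is: \emph{the product of these block-determinants is not identically zero.} Granting this, a generic rational choice of $L,M$ makes the middle component a cohomological isomorphism; finite-dimensionality of $\mathfrak{h}(X)$ together with Kimura's conservativity (Theorem \ref{TeoConservativity}) then upgrades it to an isomorphism $a_n'\colon\mathfrak{h}(X)\iso\mathfrak{h}(Y)$ of Chow motives---in the known finite-dimensional cases the motive is of abelian type, so homological and numerical equivalence agree and the cohomological isomorphism of the middle component is enough---giving exactly the assertion of the conjecture in $DM_{gm}(\C)_{\Q}$.

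The essential difficulty lies in the non-vanishing of that determinant polynomial, and this is where I expect the real obstruction. For the surface cases of interest---K3 and abelian surfaces---I would exploit that $v^{H}(\sA)$ is a Hodge isometry of the Mukai lattice $H^{0}\oplus H^{2}\oplus H^{4}$ in which the line-bundle twists realize precisely the unipotent $B$-field transformations; Orlov's derived Torelli theorem then lets one normalize the isometry so that its degree-preserving part matches up $\NS$ with $\NS$ and the transcendental lattices (whose motives $\mathfrak{t}_2$ are finite-dimensional in the cases listed in Section~2), forcing the polynomial to be non-zero. For general $X,Y$ the admissible twists form only the finite-dimensional family parametrized by $c_1(L)$ and $c_1(M)$, not the full unipotent group, so the non-vanishing is a genuine compatibility of Fourier--Mukai transforms with the cohomological grading that is not formal; together with the reliance on finite-dimensionality---Kimura's conjecture, open in general---this is the principal gap separating the present reduction from a proof of Conjecture \ref{Conj2Or} in full generality.
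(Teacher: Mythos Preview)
The statement you chose is Conjecture~\ref{Conj2Or}, which the paper merely \emph{records} as an open conjecture of Orlov; there is no proof of it in the paper, and hence nothing to compare your attempt against. What the paper establishes in this section is only the ungraded isomorphism of Proposition~\ref{PropFDStabilityDer}(a) and, under Orlov's support hypothesis~$(*)$, the graded one (Theorem~\ref{TeoOrDer}); the subsequent remarks explain why $(*)$ essentially forces $X\simeq Y$ for surfaces, which is precisely why Conjecture~\ref{Conj2Or} is left open.

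Your text is, as you yourself say in the final paragraph, a strategy rather than a proof. The reduction is set up correctly: the formula for $a_n'$ in terms of the higher-dimensional components $a_{n+k+l}$ is right, and the cohomological action of the twist is indeed conjugation by the unipotent operators $e^{c_1(L)}\cup(-)$ and $e^{c_1(M)}\cup(-)$. The step ``$H^*(a_n')$ is a graded isomorphism $\Rightarrow$ $a_n'$ is an isomorphism of Chow motives'' does follow from Theorem~\ref{TeoConservativity}, but only once one also knows that homological and numerical equivalence agree on the relevant products, so that the faithful realization $H^*$ descends to the semisimple category $\sM_{\mathrm{num}}$ and, being exact there, reflects isomorphisms; you acknowledge this via the ``abelian type'' parenthetical, and it is a genuine extra hypothesis beyond finite-dimensionality. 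The remaining ingredient---non-vanishing of the block-determinant as a polynomial on $\NS(X)_{\Q}\times\NS(Y)_{\Q}$---is the substantive content of the conjecture, and you give no argument for it beyond a plausibility sketch in the K3 case. So what you have is a reasonable heuristic reduction of Conjecture~\ref{Conj2Or} to Kimura's conjecture, the standard conjecture $D$, and an unproved cohomological normalization lemma; it is not a proof, and the paper does not claim one either.
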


\section{Derived equivalence and complex K3 surfaces}  

Let us now consider Orlov's Conjecture \ref{ConjOr} in low dimension; a case of particular interest is that of K3 surfaces. 
We recall that if $Y$ is a Fourier-Mukai partner of a K3 surface $X$ (respectively abelian surface), 
then also $Y$ is a K3 surface (respectively abelian surface). 
\medskip

We fix some notation. 
For a K3, or abelian, smooth projective complex surface $X$ we have the {\bf Mukai lattice},
also called {\bf extended Hodge lattice} in [B-M, 5], that is the cohomology ring
$$
\widetilde{H}(X,\Z):=H^{0}(X,\Z)\oplus H^{2}(X,\Z)\oplus H^{4}(X,\Z), 
$$
endowed with the symmetric bilinear form 
$$
\left<(r_{1},D_{1},s_{1}),(r_{1},D_{1},s_{1})\right>:=D_{1}\cdot D_{2}-r_{1}s_{2}-r_{2}s_{1},
$$
and the following Hodge decomposition
$$
\widetilde{H}^{(0,2)}(X,\C)=H^{0,2}(X,\C),\quad \widetilde{H}^{(2,0)}(X,\C)=H^{2,0}(X,\C),
$$
$$
\widetilde{H}^{(1,1)}(X,\C)=H^{0}(X,\C)\oplus H^{1,1}(X,\C)\oplus H^{4}(X,\C).
$$
Inside $H^{2}(X,\Z)$ we have two sublattices, the {\bf Neron-Severi lattice} 
$$
\textrm{NS}(X)=H^{2}(X,\Z)\cap H^{1,1}(X,\C),
$$
and its orthogonal complement $T_{X}$, the {\bf transcendental lattice} of $X$.
The transcendental lattice inherits a Hodge structure from $H^{2}(X,\Z)$. 
\smallskip

\begin{defn}
Let $X$ and $Y$ be two complex K3 surfaces. 
A map $T_{X} \to T_{Y}$ (resp. $T_{X,\Q } \to T_{Y,\Q}$) is a {\bf Hodge homorphism  of} 
(resp. {\bf rational}) {\bf Hodge structures} 
if  it preserves the Hodge structures  of  $H^2_{tr}(X) \otimes \C$ and  of $H^2_{tr}(Y) \otimes \C$,  
i.e. if the one dimensional subspace $ H^{2,0}(X) \subset T_X \otimes \C$ goes to $ H^{2,0}(Y) \subset T_Y \otimes \C$. 
A Hodge isomorphism $T_{X } \to T_{Y}$ is an  {\bf Hodge isometry} if it is an isometry with respect 
to the quadratic form induced by the usual intersection form. 
A rational Hodge isometry $\phi\colon T_{X,\Q } \to T_{Y,\Q}$
is {\bf induced by an algebraic cycle} $\Gamma  \in CH_{2}(X \times Y)_{\Q}$ if  
$\phi=\Gamma_* : T_{X,\Q } \to T_{Y,\Q}$
(cf. [Mu, pp. 346-347]).
\end{defn}

Due to work of Mukai and Orlov ([Mu], [Or1, 3.3 and 3.13], [B-M, 5.1])
we have the following result:

\begin{thm}\label{TeoOrK3}
Let $X$ and $Y$ be a pair of K3 (resp. abelian) surfaces. The following statements are equivalent.

\item{\bf (a)}
$X$ and $Y$ are derived equivalent,
\smallskip

\item{\bf (b)}
the transcendental lattices $T_{X}$ and $T_{Y}$ are Hodge isometric,
\smallskip

\item{\bf (c)}
the extended Hodge lattices $\widetilde{H}(X,\Z)$ and $\widetilde{H}(Y,\Z)$ are Hodge isometric,
\smallskip

\item{\bf (d)} 
Y is isomorphic to a fine, two-dimensional moduli space of stable sheaves on $X$.
\end{thm}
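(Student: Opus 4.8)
The plan is to prove the chain $\textbf{(d)}\Rightarrow\textbf{(a)}\Rightarrow\textbf{(c)}\Rightarrow\textbf{(b)}\Rightarrow\textbf{(c)}\Rightarrow\textbf{(d)}$, so that $\textbf{(c)}\Leftrightarrow\textbf{(b)}$ while $\textbf{(a)}$, $\textbf{(c)}$, $\textbf{(d)}$ are cyclically equivalent; I will treat the K3 case in detail, the abelian case going through \emph{mutatis mutandis} using Mukai's theory of moduli of sheaves on abelian surfaces, see [B-M, \S5]. The ingredients are: Orlov's theorem that every equivalence is a Fourier--Mukai transform [Or1, Th. 2.19]; the fact that the cohomological realization of such a transform, given by the Mukai vector, is a Hodge isometry of extended Hodge lattices; Mukai's construction of fine moduli spaces of stable sheaves on a K3 surface, together with his canonical Hodge isometry $v^{\perp}/v\simeq H^2(M_X(v),\Z)$; Nikulin's theory of primitive embeddings of lattices; and the global Torelli theorem for K3 surfaces.

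For $\textbf{(d)}\Rightarrow\textbf{(a)}$: if $Y\cong M_X(v)$ is a fine two--dimensional moduli space with universal family $\mathcal{E}\in D^b(X\times Y)$, Mukai's lemma (for non--isomorphic stable sheaves of Mukai vector $v$ one has $\textrm{Hom}=0=\textrm{Ext}^{2}$ by stability and Serre duality, while $\textrm{Ext}^{1}$ has the expected rank $v^{2}+2=2$) shows that $\Phi_{\mathcal{E}}$ is fully faithful; since $\omega_X\cong\mathcal{O}_X$ and $\omega_Y\cong\mathcal{O}_Y$ its two adjoints coincide, and a fully faithful exact functor between the bounded derived categories of two such surfaces is then an equivalence. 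For $\textbf{(a)}\Rightarrow\textbf{(c)}$: by [Or1, Th. 2.19] the equivalence is $\Phi_{\mathcal{A}}$ for some $\mathcal{A}\in D^b(X\times Y)$, and its action on $\widetilde H(-,\Z)$ is the correspondence induced by the Mukai vector $v(\mathcal{A})=\textrm{ch}(\mathcal{A})\sqrt{\textrm{td}_{X\times Y}}$; Grothendieck--Riemann--Roch shows this is an isometry for the Mukai pairing, and since Chern characters of coherent sheaves are of Hodge type and the twist by $\sqrt{\textrm{td}}$ preserves $(p,p)$--classes it preserves the Hodge decompositions, so it is a Hodge isometry $\widetilde H(X,\Z)\iso\widetilde H(Y,\Z)$.

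The equivalence $\textbf{(c)}\Leftrightarrow\textbf{(b)}$ is lattice theory. Because $\widetilde H^{(2,0)}(X,\C)=H^{2,0}(X,\C)$, the orthogonal complement in $\widetilde H(X,\Z)$ of the sublattice of $(1,1)$--classes $\widetilde{\NS}(X)=H^0(X,\Z)\oplus\NS(X)\oplus H^4(X,\Z)$ is exactly $T_X\subset H^2(X,\Z)$ (the summands $H^0$ and $H^4$ span a hyperbolic plane orthogonal to $H^2$), and $\widetilde{\NS}(X)$ is of pure type $(1,1)$; hence any Hodge isometry $\widetilde H(X,\Z)\iso\widetilde H(Y,\Z)$ restricts to a Hodge isometry $T_X\iso T_Y$, giving $\textbf{(c)}\Rightarrow\textbf{(b)}$. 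Conversely, $\widetilde H(X,\Z)$ and $\widetilde H(Y,\Z)$ are both isometric to the fixed even unimodular Mukai lattice $\Lambda$ of signature $(4,20)$; a Hodge isometry $\psi\colon T_X\iso T_Y$ identifies the discriminant forms of $T_X$ and $T_Y$, hence by unimodularity of $\Lambda$ also those of their orthogonal complements, which share the signature $(2,\rho)$ with $\rho=\rho(X)=\rho(Y)$; Nikulin's uniqueness theorem for primitive embeddings then makes these complements isometric, so $\psi$ extends to an isometry $\widetilde H(X,\Z)\iso\widetilde H(Y,\Z)$, automatically a Hodge isometry since outside the transcendental part everything is of type $(1,1)$.

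It remains to prove $\textbf{(c)}\Rightarrow\textbf{(d)}$, the core of the theorem. Given a Hodge isometry $\Psi\colon\widetilde H(X,\Z)\iso\widetilde H(Y,\Z)$, set $v:=\Psi^{-1}(0,0,1)$, the transported Mukai vector of a point on $Y$: it is a primitive isotropic class in $\widetilde H^{(1,1)}(X,\Z)$, hence of the form $(r,c_1,s)$ with $c_1\in\NS(X)$. Composing $\Psi$ with Hodge isometries of $\widetilde H(Y,\Z)$ induced by geometric autoequivalences of $D^b(Y)$ (shifts, tensoring by line bundles, the spherical twist along $\mathcal{O}_Y$), one may assume $v$ is \emph{positive}, i.e.\ the Mukai vector of an honest $v$--generic polarized moduli problem on $X$; then Mukai's theorems give that $M:=M_X(v)$ is a fine two--dimensional K3 surface, carrying a universal family on $X\times M$ and a canonical Hodge isometry $v^{\perp}/v\iso H^2(M,\Z)$. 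Since $(0,0,1)^{\perp}/(0,0,1)=H^2(Y,\Z)$, and $T_X\subset v^{\perp}$ with $T_X\cap\Z v=0$, the isometry $\Psi$ descends to a Hodge isometry $H^2(M,\Z)\iso H^2(Y,\Z)$, which the global Torelli theorem upgrades to an isomorphism $M\iso Y$; thus $Y$ is isomorphic to a fine two--dimensional moduli space of stable sheaves on $X$. The main obstacle is precisely this last implication: normalizing the primitive isotropic class $v$, via the action of the autoequivalence group on $\Lambda$, to the Mukai vector of a \emph{nonempty} and \emph{fine} moduli space of the expected dimension, and checking that Mukai's isometry $v^{\perp}/v\simeq H^2(M_X(v),\Z)$ is compatible with $\Psi$ on the nose; together with the lattice--extension step of the previous paragraph this is the technical heart, all of which is due to Mukai and Orlov.
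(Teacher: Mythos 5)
Your proposal is correct in outline, but note that the paper does not prove Theorem \ref{TeoOrK3} at all: it is quoted as a known result with references to Mukai, Orlov [Or1, 3.3 and 3.13] and Bridgeland--Maciocia [B-M, 5.1], so there is no internal proof to compare against. What you have written is a faithful reconstruction of that standard Mukai--Orlov argument: (d)$\Rightarrow$(a) via the Bondal--Orlov fully-faithfulness criterion plus triviality of the canonical bundles, (a)$\Rightarrow$(c) via the integral Mukai-vector realization of a Fourier--Mukai kernel and Grothendieck--Riemann--Roch, (c)$\Leftrightarrow$(b) by lattice theory, and (c)$\Rightarrow$(d) by transporting $(0,0,1)$, normalizing, and applying Mukai's theory of moduli of sheaves together with weak Torelli. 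Two places deserve slightly more care than your wording gives them: in (b)$\Rightarrow$(c) it is not enough that the orthogonal complements of $T_X$ and $T_Y$ in the Mukai lattice are isometric; to glue an isometry of the complements with $\psi$ one needs surjectivity of the map from the isometry group of the complement onto the isometries of its discriminant form, which holds here by Nikulin's criterion precisely because the complement contains the hyperbolic plane $H^0\oplus H^4$. And in (c)$\Rightarrow$(d), besides normalizing $v$ to a positive Mukai vector, fineness of $M_X(v)$ is where the full extended-lattice isometry (rather than just the transcendental one) is used: the algebraic class $\Psi^{-1}(1,0,0)$ pairs to $\pm 1$ with $v$, and this unimodularity is what guarantees a universal family. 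With these points made explicit, your argument matches the proofs in the cited sources.
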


The next result relates the finite-dimensionality of the motive of a K3 surface with Orlov's conjecture.

\begin {thm}\label{TeoDerK3} 
Let $X,Y$ be smooth projective K3 surfaces over $\C$ 
such that $X$ has an elliptic fibration
and the Chow motive $\mathfrak{h}(X)$ is finite dimensional.
If  $D^b(X) \simeq D^b(Y)$ then the motives 
$M(X)$ and $M(Y)$ are isomorphic in $DM_{gm}(\C)$.
\end{thm}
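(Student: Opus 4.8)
The plan is to use the refined Chow--K\"unneth decomposition of a K3 surface to reduce the statement to an isomorphism of transcendental motives, and then to upgrade a cohomological isomorphism to an isomorphism of Chow motives by means of Kimura's nilpotence theorem. By \ref{SubSecRCK}, for any K3 surface $S$ one has $\mathfrak{h}(S)\simeq\un\oplus\mathbb{L}^{\oplus\rho(S)}\oplus\mathfrak{t}_2(S)\oplus\mathbb{L}^{\otimes 2}$, and derived equivalent surfaces share the same Picard number; hence the summands $\un$, $\mathbb{L}^{\oplus\rho}$ and $\mathbb{L}^{\otimes 2}$ of $\mathfrak{h}(X)$ and $\mathfrak{h}(Y)$ agree, and it is enough to produce an isomorphism $\mathfrak{t}_2(X)\iso\mathfrak{t}_2(Y)$ in $\sM_{rat}(\C)$ --- the conclusion $M(X)\simeq M(Y)$ in $DM_{gm}(\C)$ then follows because the functor $\Phi\colon\sM_{rat}(\C)\to DM_{gm}(\C)$ is a full embedding. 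Observe also that, by Proposition \ref{PropFDStabilityDer}(c) applied to the surface $X$, the motive $\mathfrak{h}(Y)$ is finite-dimensional as well, so that both $\mathfrak{t}_2(X)$ and $\mathfrak{t}_2(Y)$ are evenly finite-dimensional.

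To build the candidate isomorphism I would follow the argument in the proof of Proposition \ref{PropFDStabilityDer}(a). Let $\mathcal{A}\in D^{b}(X\times Y)$ be the kernel of $\Phi_{\mathcal{A}}$ and $\mathcal{B}\in D^{b}(Y\times X)$ that of its quasi-inverse, and set $a=v^{\textrm{CH}}(\mathcal{A})$, $b=v^{\textrm{CH}}(\mathcal{B})$, so that $b\circ a=[\Delta_{X}]$ and $a\circ b=[\Delta_{Y}]$ as ungraded correspondences, while on cohomology $a_{*}$ and $b_{*}$ realize the cohomological Fourier--Mukai transforms, which are mutually inverse Hodge isometries of the Mukai lattices (Mukai, Orlov; cf. the discussion preceding Theorem \ref{TeoOrK3}). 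Writing $a_{2}\in CH_{2}(X\times Y)_{\Q}$ and $b_{2}\in CH_{2}(Y\times X)_{\Q}$ for the degree-zero (``middle'') components, put
$$
\psi=\pi^{\textrm{tr}}_{2}(Y)\circ a_{2}\circ\pi^{\textrm{tr}}_{2}(X)\colon\mathfrak{t}_{2}(X)\To\mathfrak{t}_{2}(Y),\qquad
\phi=\pi^{\textrm{tr}}_{2}(X)\circ b_{2}\circ\pi^{\textrm{tr}}_{2}(Y)\colon\mathfrak{t}_{2}(Y)\To\mathfrak{t}_{2}(X).
$$
Since an algebraic correspondence annihilates transcendental classes whose image lies in $H^{0}$ or $H^{4}$, only the middle components act nontrivially on $H^{2}_{\textrm{tr}}$; combined with the previous sentence this shows that $\psi_{*}\colon H^{2}_{\textrm{tr}}(X)\iso H^{2}_{\textrm{tr}}(Y)$ and $\phi_{*}$ are inverse Hodge isometries. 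Because $H^{*}(\mathfrak{t}_{2}(S))=H^{2}_{\textrm{tr}}(S)$ is concentrated in degree $2$, this says precisely that $\phi\circ\psi$ realizes $\textrm{Id}$ on $H^{*}(\mathfrak{t}_{2}(X))$ and $\psi\circ\phi$ realizes $\textrm{Id}$ on $H^{*}(\mathfrak{t}_{2}(Y))$.

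Consequently $\textrm{Id}_{\mathfrak{t}_{2}(X)}-\phi\circ\psi$ and $\textrm{Id}_{\mathfrak{t}_{2}(Y)}-\psi\circ\phi$ are numerically trivial endomorphisms (their reduction modulo numerical equivalence is a quotient of their vanishing cohomological realization) of finite-dimensional motives, hence nilpotent by Kimura's nilpotence theorem; therefore $\phi\circ\psi$ and $\psi\circ\phi$ are isomorphisms. A morphism admitting both a left and a right inverse is an isomorphism, so $\psi\colon\mathfrak{t}_{2}(X)\iso\mathfrak{t}_{2}(Y)$, and the reduction in the first paragraph yields $\mathfrak{h}(X)\simeq\mathfrak{h}(Y)$ in $\sM_{rat}(\C)$, whence $M(X)\simeq M(Y)$ in $DM_{gm}(\C)$.

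The finite-dimensionality hypothesis enters only in this last step, but there it is essential: without it $\psi$ would be known merely to be a cohomological isomorphism. The step I expect to be the main obstacle is the second one --- producing an \emph{algebraic} cycle on $X\times Y$ whose middle component realizes the Hodge isometry of transcendental lattices of Theorem \ref{TeoOrK3}(b), and keeping track of how the middle components compose. This is where the elliptic fibration on $X$ is used: through Theorem \ref{TeoOrK3}(d) it realizes $Y$ concretely as a two-dimensional fine moduli space of stable sheaves on $X$, making the kernel and its adjoint explicit enough to carry out the verification; the hypothesis is expected to be removable in view of [Mu2, Th.~2].
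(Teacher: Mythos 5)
Your argument is correct in substance, but it takes a genuinely different route from the paper at the decisive step. The paper forgets the kernel: from Theorem \ref{TeoOrK3} it keeps only the abstract Hodge isometry $T_{X,\Q}\simeq T_{Y,\Q}$ and must then invoke Nikulin's theorem [Ni, Th. 3] to know that this isometry is induced by an algebraic cycle $\Gamma\in CH_2(X\times Y)_\Q$ --- this is precisely where the elliptic fibration (equivalently, a nonzero square-zero class in $\mathrm{NS}(X)$) enters, cf. Remark \ref{RemMukai}; it then sandwiches $\Gamma$ between the projectors $\pi_2^{\mathrm{tr}}$ and concludes, exactly as you do, with Theorem \ref{TeoConservativity}. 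You instead keep the kernel and take the algebraic correspondence to be the middle component $a_2$ of $v^{\mathrm{CH}}(\mathcal{A})$ itself. That works: since $\Phi^H_{\mathcal{A}}$ is a rational Hodge isometry of Mukai lattices it carries the algebraic part $H^0\oplus \mathrm{NS}_\Q\oplus H^4$ onto the corresponding part for $Y$, hence its orthogonal complement $T_{X,\Q}$ onto $T_{Y,\Q}$ --- a point you should spell out, since it is what guarantees that $\psi_*$ lands in $H^2_{\mathrm{tr}}(Y)$ and that $\phi_*\psi_*$ computes $b_*a_*=\mathrm{id}$ --- while, as you note, the non-middle components of an algebraic cycle annihilate $H^2_{\mathrm{tr}}$ of a surface. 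The payoff is that your proof never uses the elliptic fibration at all, so your closing paragraph mislocates its role: the kernel exists for any equivalence by Orlov's representability theorem, and no moduli description via Theorem \ref{TeoOrK3}(d) is needed; what you have sketched is the stronger, fibration-free statement that Remark \ref{RemMukai} says should hold (and which later work of Huybrechts established even without finite-dimensionality). The endgame --- homological, hence numerical, isomorphism of transcendental motives upgraded by Kimura nilpotence, then reassembling $\mathfrak{h}=\un\oplus\Le^{\oplus\rho}\oplus\mathfrak{t}_2\oplus\Le^{\otimes 2}$ using equality of Picard numbers --- is the same in both proofs, and your use of Proposition \ref{PropFDStabilityDer} to transfer finite-dimensionality to $\mathfrak{h}(Y)$ matches the paper's.
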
 
  
\begin {proof} 
By point {\bf (b)} of Proposition \ref{PropFDStabilityDer} we know that $\mathfrak{h}(Y)$ is finite-dimensional.
Theorem \ref{TeoOrK3} ensures the existence of a Hodge isometry $\phi :  T_{X,\Q} \iso T_{Y,\Q}$ 
which, by [Ni, Th. 3],  is induced by an algebraic cycle,  i.e. there exists an algebraic correspondence 
$\Gamma \in CH_{2}(X \times Y)_{\Q}$ such that $\Gamma_*=\phi$. 
Then $\pi^{Y}_{2}\circ\Gamma\circ \pi^{X}_{2}$ induces an isomorphism between
the transcendental motives as homological motives, hence numerical ones;
thus, thanks to Theorem \ref{TeoConservativity}, it is an isomorphism at the level of Chow motives by finite-dimensionality.
Then $\mathfrak{h}(X)$ and $\mathfrak{h}(Y)$ are isomorphic in 
$\sM_{rat}(\C)$, 
hence $M(X)$ and $M(Y)$ are isomorphic in $DM_{gm}(\C)$.
\end{proof} 

\begin{rk}\label{RemMukai}
Besides the properties of finite-dimensional objects, the other key point in the previous argument 
is the algebraicity of $\phi$.
This question goes back to a {\bf S{\u{a}}farev{\u\i}c's conjecture} stated at the ICM 1970 in Nice [Sh, B4 p. 416].
Shioda and Inose verified the conjecture in [S-I] for  singular K3 surfaces 
(those having the maximum possible Picard number, i.e. $\rho(X)=20$).
Then Mukai proved it in [Mu1, 1.10] for K3 surfaces with $\rho(X)\geq 11$,
and Nikulin showed its validity in [Ni, proof of Th.3] 
whenever $\textrm{NS}(X)$ contains a (nonzero) square zero element; 
this is is certainly the case if $\rho\geq 5$ and, 
according to Pjatetski{\u\i}-S{\u{a}}piro and S{\u{a}}farev{\u\i}c [PS-S],
it is equivalent to the existence of an elliptic fibration on $X$.
Eventually Mukai claimed to have completely solved the problem at ICM 2002 in Beijing [Mu2, Th. 2],
hence the hypothesis on the elliptic fibration could be removed.
\end{rk}

\section{Nikulin involutions}

Let  $X$ be  a smooth projective K3 surface  over $\C$ and let
$\Phi_{\mathcal A}\colon D^b(X) \iso D^b(X)$ be an autoequivalence. 
To  $\Phi_{\mathcal A}$  we can associate an Hodge isometry   
$$
\Phi^H_{\mathcal A} \colon \tilde H(X,\Z) \simeq \tilde H(X,\Z),
$$
as well as an automorphism of the Chow group 
$$
\Phi^{CH}_{\mathcal A} \colon CH^*(X) \simeq CH^*(X)
$$ 
induced by the correspondence $v^{CH}(\mathcal A) \in CH^*(X \times X)$ defined in [Hu2, 4.1]. 
We therefore get the two representations
\begin{diagram}
&&&&\mathrm{Aut} (CH^*(X))\\
&&\ruTo^{\rho^{CH}}&&\\
\mathrm{Aut} (D^b(X))& &&&\\
&&\rdTo_{\rho^H }&&\\
&&&&\mathcal O(\tilde H(X,\Z))\\
\end{diagram}

\noindent Here $\mathcal O(\tilde H(X,\Z))$ is the group of all integral Hodge isometries of the weight two Hodge structure defined on the Mukai lattice $\tilde H(X, \Z)$ and $\mathrm{Aut} (CH^*(X))$ denotes the group of  all  
automorphisms of the additive group $CH^*(X)$.
The following Theorem has been proved by D. Huybrects in [Hu1, 2.7].

\begin {thm}\label{TeoHuy}
$\Ker(\rho^H) = \Ker(\rho^{CH})$. 
\end{thm}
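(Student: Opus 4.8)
The plan is to relate the two kernels of representations by passing through the action on cohomology, and then to upgrade statements from cohomology to Chow groups using the finite-dimensionality machinery already available, together with Huybrechts' calculation of $v^{CH}(\mathcal A)$. First, one inclusion is essentially formal: if $\Phi_{\mathcal A}^{CH}=\mathrm{id}$ on $CH^*(X)$, then since the action $\Phi_{\mathcal A}^H$ on $\widetilde H(X,\Z)$ is the one induced by the algebraic correspondence $v^{CH}(\mathcal A)$ via the cycle class map, it must also be the identity; so $\Ker(\rho^{CH})\subseteq\Ker(\rho^H)$. The point is that the Mukai lattice is the image of algebraic (Mukai) classes of sheaves, and the cohomological realization functor is compatible with the correspondence action on $CH^*(X)$, so an autoequivalence acting trivially on all Chow groups acts trivially on all Chern characters, hence on $\widetilde H(X,\Z)$.

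For the reverse inclusion $\Ker(\rho^H)\subseteq\Ker(\rho^{CH})$, suppose $\Phi_{\mathcal A}$ acts as the identity on $\widetilde H(X,\Z)$, in particular on $H^*(X,\Z)$ with its Hodge structure. I would decompose the action on $CH^*(X)$ degree by degree, using the refined Chow--K\"unneth decomposition of $\mathfrak h(X)$ from subsection \ref{SubSecRCK}: $\mathfrak h(X)\simeq\un\oplus\mathbb L^{\oplus\rho}\oplus\mathfrak t_2(X)\oplus\mathbb L^{\otimes 2}$. On $CH^0$ and on $CH^2$-modulo-transcendental (the $\mathbb L$ and $\mathbb L^{\otimes 2}$ and ${\mathfrak h_2}^{\mathrm{alg}}$ pieces), the Chow group injects into cohomology, so triviality on $H^*$ forces triviality there; these contributions also govern $\mathrm{Pic}(X)$. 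The only subtle piece is the action on $A_0(X)_0=A_0(\mathfrak t_2(X))$, the degree-zero $0$-cycles, which is exactly where cohomology loses information. Here is where finite-dimensionality of $\mathfrak h(X)$ enters: the endomorphism $\Phi_{\mathcal A}^{CH}-\mathrm{id}$ of $\mathfrak h(X)$, restricted to $\mathfrak t_2(X)$, is given by a correspondence that becomes trivial in the homological (hence numerical) realization, so by the nilpotence consequence of Theorem \ref{TeoConservativity} (Kimura nilpotence) it is a nilpotent endomorphism of $\mathfrak t_2(X)$.

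But nilpotence alone does not immediately give that the induced map on $A_0(\mathfrak t_2(X))$ is the identity. To finish I would use Huybrechts' explicit description from [Hu1, 2.7]: the correspondence $v^{CH}(\mathcal A)$ acting on $CH^*(X)$ can be compared, after passing to the Mukai-twisted picture, with an orthogonal transformation, and the key additional input is the structure of the Beauville--Voisin zero-cycle $c_X$ and the multiplicative structure $CH^*(X)\times CH^*(X)\to CH^*(X)$; any Fourier--Mukai autoequivalence acts on $CH^0(X)\oplus\mathrm{Pic}(X)\oplus\mathbb Z c_X$ through $\rho^H$, and the finite-dimensionality guarantees that the remaining ``variable'' part $A_0(X)_0$ carries no nonzero numerically trivial sub-motive that could support a nontrivial unipotent action --- more precisely, a unipotent automorphism $u$ of $\mathfrak t_2(X)$ with $u-\mathrm{id}$ numerically trivial must equal $\mathrm{id}$ once one knows, again from [Ki, 7.3] applied to the finite-dimensional motive $\ker(u-\mathrm{id})$ vs.\ its complement, that $H^*(\mathrm{coker}(u-\mathrm{id}))=0$ implies $\mathrm{coker}(u-\mathrm{id})=0$. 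Combining the trivial-on-$\mathrm{Pic}$, trivial-on-$CH^0$, and trivial-on-$A_0(X)_0$ statements gives $\Phi_{\mathcal A}^{CH}=\mathrm{id}$, i.e.\ $\Ker(\rho^H)\subseteq\Ker(\rho^{CH})$.

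The main obstacle I anticipate is exactly the last step: translating ``$\Phi_{\mathcal A}^{CH}-\mathrm{id}$ is nilpotent on $\mathfrak t_2(X)$'' into ``$\Phi_{\mathcal A}^{CH}=\mathrm{id}$ on $A_0(X)_0$''. Nilpotence of a motivic endomorphism is not the same as triviality of the induced map on Chow groups, so one genuinely needs the finer information --- either the finiteness of the relevant group of autoequivalences acting through a given Hodge isometry (so that a power of $\Phi_{\mathcal A}$ acts trivially and one can extract roots), or Huybrechts' direct computation showing that on the transcendental part the action factors through the cohomological one. I would lean on the latter, treating [Hu1, 2.7] as the technical heart and using finite-dimensionality only to control the pieces of $\mathfrak h(X)$ where Chow and cohomology diverge.
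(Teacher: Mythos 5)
There is a genuine gap --- in fact several --- and it is worth noting first that the paper itself offers no proof of Theorem \ref{TeoHuy}: it is quoted as Huybrechts' result [Hu1, 2.7], whose actual proof uses spherical objects, the Beauville--Voisin class and deformations of Fourier--Mukai kernels, not motivic finite-dimensionality. Your argument, by contrast, is conditional on $\mathfrak{h}(X)$ being finite-dimensional, which is not a hypothesis of the statement and is open for a general complex K3 surface (the paper lists only special classes where it is known). So at best you would prove a weaker, conditional statement. Moreover your closing fallback --- ``treating [Hu1, 2.7] as the technical heart'' --- is circular, since [Hu1, 2.7] \emph{is} the theorem to be proved.

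Even granting finite-dimensionality, the decisive step fails, exactly where you anticipate trouble. Kimura nilpotence (Theorem \ref{TeoConservativity}) only gives that $u=\pi_2^{\textrm{tr}}\circ v^{CH}(\mathcal{A})\circ\pi_2^{\textrm{tr}}$ is a unipotent automorphism of $\mathfrak{t}_2(X)$, and a unipotent automorphism can perfectly well act nontrivially on $A_0(X)_0$: nilpotence of a correspondence in $\textrm{End}_{\sM_{\rat}}(\mathfrak{t}_2(X))$ does not make its action on Chow groups vanish. The proposed repair via ``$\ker(u-\mathrm{id})$ versus $\mathrm{coker}(u-\mathrm{id})$'' is not available: $\sM_{\rat}(\C)$ is pseudo-abelian, not abelian, and $u-\mathrm{id}$ is nilpotent, not idempotent, so it cuts out no direct summand to which [Ki, 7.3] could be applied. (Contrast this with the paper's Section 5: in Theorem \ref{TeoNik} the homologically trivial piece $M$ \emph{is} a direct summand, which is why conservativity kills it; and even there the conclusion ``$\iota$ acts as the identity on $A_0(X)$'' is obtained through the isomorphism $\mathfrak{h}(X)\simeq\mathfrak{h}(Y)$ and Proposition \ref{PropInvariance}, never by a nilpotence-implies-identity shortcut. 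If your shortcut worked, Conjecture \ref{ConjHuy} for finite-dimensional $X$ would be immediate.) Finally, the direction you call ``essentially formal'' is not: $\widetilde H(X,\Z)$ contains the transcendental lattice $T_X$, which is not spanned by algebraic classes, so triviality of $\Phi^{CH}_{\mathcal{A}}$ on $CH^*(X)$ does not formally force triviality on $T_X$; one needs an additional input, e.g.\ a Bloch--Srinivas type argument showing that a correspondence acting as zero on $A_0$ of a complex surface acts as zero on $H^2_{\textrm{tr}}$, or Huybrechts' own lattice-theoretic argument.
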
   

\noindent 
From  Theorem \ref{TeoHuy}, if $\rho^H(\Phi_{\mathcal A}) = \Phi^H_{\mathcal A}$ is the identity in  
$\mathcal O(\tilde H(X,\Z))$,  then the correspondence $v^{CH}(\mathcal A)$ acts as the identity on $CH^*(X)$. 
In particular $\phi^H_{\mathcal A}$ acts as the identity on 
$H^{2,0}(X)\simeq H^0(X,\Omega^2_X) \subset H^2_{tr}(X,\C)$.  
The above Theorem suggested Huybrechts' conjecture \ref{ConjHuy}, that is
that any {\it symplectic} automorphism $f \in \textrm{Aut}(X)$ acting trivially on $H^{2,0}(X)$
acts trivially also on $CH^2(X)$.

In this section we deal with the case of a {\it symplectic involution}.
\medskip

\begin{defn}\label{DefNikInv} 
A {\bf Nikulin involution}  $\iota$ on a K3 surface $X $ is a symplectic involution, 
i.e.  $\iota^*(\omega)=\omega$ for all $\omega \in H^{2,0}(X)$. 
\end{defn} 

A Nikulin involution $\iota$ on a  complex projective K3 $X$ has the following special
properties, as proved by Nikulin (see e.g. [Mo, 5.2]):
\begin{itemize}
\item the fixed locus of $\iota$ consists of precisely eight distinct points and 
\item the minimal resolution $Y$ of the quotient $X/\iota=X/<\iota>$ is a K3 surface.
\end{itemize}
The surface $Y$ can also be obtained as the quotient of the blow up $\tilde X$ 
of $X$ in the $8$ fixed points by the extension $\tilde \iota$ of $\iota$ to $\tilde X$
([Mo, 3], [VG-S, 1.4]).
In other words we get the commutative diagram 
$$ 
\CD 
\tilde X @>{b}>>X \\
@V{g}VV   @VVV \\
 Y@>>> X/\iota
\endCD 
$$
where $Y$ is a desingularization of the quotient surface $X/\iota$ and $ Y \simeq \tilde X/\tilde \iota$, 
with $\tilde \iota$  the involution induced by $i$ on $\tilde X$. 
\par
\noindent 
As explained in [VG-S, 2.1] a K3 surface with a Nikulin involution has $\rho(X) \ge 9$.
Moreover ([VG-S, 2.4]) $\iota$ induces an isomorphism $\phi_\iota\colon T_{X,\Q} \iso T_{Y,\Q}$ 
of rational Hodge structures.
\par

Let $X$, $\tilde X$, and $Y$ be as in the diagram above and let $\mathfrak{t}_2(X)$ 
be the transcendental part of the motive of $X$. 
By [Ma, \S 3 Example 1] the degree $2$  map $g$ induces a splitting in $\sM_{rat}(\C)$  
$$
\mathfrak{h}(\tilde X) = (X,p) \oplus (X,\Delta_X -p) \simeq \mathfrak{h}(Y) \oplus (X, \Delta_X -p)
$$
where $p =1/2(\Gamma^t_g \circ \Gamma_g )\in A_{2}(X\times X)$.  
Since $\mathfrak{t}_2(-)$ is a birational  invariant we have $\mathfrak{t}_2(X) =\mathfrak{t}_2( \tilde X)$. 
From the above splitting it follows that $\mathfrak{t}_2(Y)$ is a direct summand of 
$\mathfrak{t}_2(X)$, i.e. $\mathfrak{t}_2(X) =\mathfrak{t}_2(Y) \oplus N$.

\begin{prop}\label{PropInvariance}
Let $X$, $\tilde X$, and $Y$ be as in the diagram above. 
Then 
$$ 
\mathfrak{t}_2(X) \simeq \mathfrak{t}_2(Y) \Longleftrightarrow A_0(X)^\iota =A_0(X)
$$
i.e. if and only if the involution $\iota$ acts as the identity on $A_0(X)$. 
If $\mathfrak{t}_2(X) \simeq \mathfrak{t}_2(Y)$, then the rational map $X \to Y$ induces an isomorphism 
between the motives $\mathfrak{h}(X)$ and $\mathfrak{h}(Y)$ and therefore also between 
$M(X)$ and $M(Y)$ in $DM_{gm}(\C)$.
\end{prop}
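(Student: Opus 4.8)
The plan is to identify $N$ with the part of the transcendental motive of $X$ on which the Nikulin involution acts by $-1$, to compute the action of the associated idempotent on $0$-cycles, and then to pass from $0$-cycles back to Chow motives by a Bloch--Srinivas argument. The final assertion about $M(X)$ and $M(Y)$ will then follow formally from the refined Chow--K\"unneth decomposition together with the full embedding $\sM_{rat}(\C)\hookrightarrow DM_{gm}(\C)$.

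First I would make the splitting used in the excerpt explicit. Since $g$ is a Galois double cover with group $\langle\tilde\iota\rangle$, one has $\Gamma_g^{t}\circ\Gamma_g=\Delta_{\tilde X}+\Gamma_{\tilde\iota}$, so $p=\tfrac12(\Gamma_g^{t}\circ\Gamma_g)$ is the projector onto the $\tilde\iota$-invariant part of $\mathfrak{h}(\tilde X)$ and $\Delta_{\tilde X}-p$ onto the anti-invariant part, giving $\mathfrak{h}(\tilde X)\simeq\mathfrak{h}(Y)\oplus(\tilde X,\Delta_{\tilde X}-p)$. As $\Gamma_{\tilde\iota}$ commutes with the canonical projector $\pi^{\textrm{tr}}_2$ of $\tilde X$, this restricts to the transcendental motive: $\mathfrak{t}_2(\tilde X)=\mathfrak{t}_2(\tilde X)^{+}\oplus\mathfrak{t}_2(\tilde X)^{-}$ with $\mathfrak{t}_2(\tilde X)^{+}\simeq\mathfrak{t}_2(Y)$. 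Using the birational invariance $\mathfrak{t}_2(\tilde X)=\mathfrak{t}_2(X)$ (under which $\tilde\iota$ corresponds to $\iota$), we get $N=\mathfrak{t}_2(X)^{-}$; hence $\mathfrak{t}_2(X)\simeq\mathfrak{t}_2(Y)$ is equivalent to $N=0$, i.e. to the vanishing of the idempotent $e:=\pi^{\textrm{tr}}_2\circ\tfrac12(\Delta_X-\Gamma_\iota)\circ\pi^{\textrm{tr}}_2\in\textrm{End}_{\sM_{rat}(\C)}(\mathfrak{t}_2(X))$, i.e. to $\iota$ acting as the identity on $\mathfrak{t}_2(X)$ in $\sM_{rat}(\C)$.

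Now for the two directions. On $A_0(\mathfrak{t}_2(X))=A_0(X)_0$ the idempotent $e$ acts as $\tfrac12(\mathrm{id}-\iota_*)$. If $\mathfrak{t}_2(X)\simeq\mathfrak{t}_2(Y)$ then $e=0$, hence $\iota_*=\mathrm{id}$ on $A_0(X)_0$, hence on $A_0(X)$ (degrees are preserved); this is the easy implication. Conversely, assume $\iota_*=\mathrm{id}$ on $A_0(X)$. Then $e$ acts as zero on $A_0(X)_0$, and the key point is that an endomorphism of the transcendental motive of a surface that acts as zero on $0$-cycles (over a universal domain, one reducing to $\C$-points by the standard countability argument) is itself zero: by the Bloch--Srinivas decomposition of the diagonal (cf. [K-M-P]) such a cycle splits into a summand supported on $D\times X$ and one supported on $X\times(\text{points})$, and both are annihilated upon conjugation by $\pi^{\textrm{tr}}_2$ — the first factoring through the motive of a curve, the second through the motive of a point. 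Hence $e=0$ and $\mathfrak{t}_2(X)\simeq\mathfrak{t}_2(Y)$. (If moreover $\mathfrak{h}(X)$ is finite-dimensional this is immediate: $e$ is then a homologically trivial idempotent — a symplectic involution acts trivially on $H^2_{\textrm{tr}}(X)$ by Nikulin — of a finite-dimensional motive, hence nilpotent by Kimura, hence zero.)

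For the last assertion, assume $N=0$ and let $[Z]\in CH_{2}(X\times Y)_{\Q}$ be the class of the closure of the graph of the dominant degree-$2$ rational map $X\dashrightarrow Y$ (equivalently $[Z]=\Gamma_g\circ{}^{t}\Gamma_b$ up to the exceptional loci). Composing, ${}^{t}[Z]\circ[Z]$ restricted to $\mathfrak{t}_2(X)$ equals $\pi^{\textrm{tr}}_2\circ(\Delta_X+\Gamma_\iota)\circ\pi^{\textrm{tr}}_2=\mathrm{id}+\iota|_{\mathfrak{t}_2(X)}=2\,\mathrm{id}$ (using $N=0$), while $[Z]\circ{}^{t}[Z]$ restricted to $\mathfrak{t}_2(Y)$ equals $g_*g^{*}=2\,\mathrm{id}$; hence $[Z]$ induces an isomorphism $\mathfrak{t}_2(X)\iso\mathfrak{t}_2(Y)$, with inverse $\tfrac12\,{}^{t}[Z]$. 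Combining this with the identity on $\un$ and on $\mathbb{L}^{\otimes 2}$ and with any isomorphism $\mathbb{L}^{\oplus\rho(X)}\iso\mathbb{L}^{\oplus\rho(Y)}$ — which exists since the rational Hodge isometry $\phi_\iota\colon T_{X,\Q}\iso T_{Y,\Q}$ forces $\rho(X)=22-\dim_{\Q}T_{X,\Q}=\rho(Y)$ — yields $\mathfrak{h}(X)\simeq\mathfrak{h}(Y)$ in $\sM_{rat}(\C)$, and therefore $M(X)\simeq M(Y)$ in $DM_{gm}(\C)$ since $\mathfrak{h}$ embeds $\sM_{rat}(\C)$ fully faithfully. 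The main obstacle is precisely the implication ``$\iota_*=\mathrm{id}$ on $A_0(X)\Rightarrow N=0$'', i.e. the Bloch--Srinivas faithfulness of $\mathfrak{t}_2(-)$ with respect to $0$-cycles; everything else is bookkeeping with the refined Chow--K\"unneth decomposition and with the composition rules for $\Gamma_g$ and ${}^{t}\Gamma_b$.
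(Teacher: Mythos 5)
Your argument is correct and follows the same strategy as the paper's proof: both reduce the statement to the vanishing of the anti-invariant summand $N$ of $\mathfrak{t}_2(X)$, and both rest on the principle that endomorphisms of the transcendental motive of a surface are faithfully detected on zero-cycles. The difference lies in how that key input is sourced. The paper simply invokes [K-M-P, 5.10], which identifies $\textrm{End}_{\Mrat(\C)}(\mathfrak{t}_2(X))$ with $A_0(X_{k(X)})/A_0(X)$, the identity corresponding to the class $[\xi]$ of the generic point, and then splits $[\xi]$ into its $\bar\iota$-invariant and anti-invariant parts, so that $N=0$ iff $\bar\iota([\xi])=[\xi]$ iff $A_0(X)^\iota=A_0(X)$. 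You instead re-derive the needed faithfulness by hand: the idempotent $e=\pi_2^{\textrm{tr}}\circ\tfrac12(\Delta_X-\Gamma_\iota)\circ\pi_2^{\textrm{tr}}$ acts as $\tfrac12(\mathrm{id}-\iota_*)$ on $A_0(X)_0$, and a correspondence killing zero-cycles over a universal domain (reached from $A_0(X)$ over $\C$ by the standard spreading/countability lemma) decomposes \`a la Bloch--Srinivas into pieces supported on $D\times X$ and on $X\times(\textrm{points})$, both annihilated by conjugation with $\pi_2^{\textrm{tr}}$; this is in substance a proof of the quoted K-M-P statement, so nothing is lost, only more is written. Your handling of the final claim is actually more explicit than the paper's one-line appeal to \ref{SubSecRCK} and $\rho(X)=\rho(Y)$: you verify that the graph correspondence itself is invertible on $\mathfrak{t}_2$ via ${}^t\Gamma_g\circ\Gamma_g=\Delta_{\tilde X}+\Gamma_{\tilde\iota}$ and $\Gamma_g\circ{}^t\Gamma_g=2\Delta_Y$, which is what ``the rational map induces the isomorphism'' really requires. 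One shared caveat: in the implication $\mathfrak{t}_2(X)\simeq\mathfrak{t}_2(Y)\Rightarrow N=0$ both you and the paper tacitly read the hypothesis as the isomorphism coming from the splitting (equivalently $N=0$); for a purely abstract isomorphism this step would need a cancellation property that Chow motives do not have in general, though it is available under finite-dimensionality, as in Theorem \ref{TeoNik}.
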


\begin{proof} 
Let $k(X)$ be the field of rational functions of $X$; 
then the Chow group of $0$-cycles on $X_{k(X)}$ may be identified with
$$ 
\textrm{lim} _{U \subset X}A^2(U \times X) \simeq A_0(X_{k(X)})
$$
where $U$ runs among the open sets of $X$ (see [Bl, Lecture 1. Appendix]).  
Since $\textrm{Alb}(X) =0$, the Albanese kernel $T(X_{k(X)})$ coincides  with $A_0(X_{k(X)})_0$. 
By [K-M-P, 5.10] there is an isomorphism  
$$
\textrm{End}_{\sM_{rat}}(\mathfrak{t}_2(X)) \simeq \frac {A_0(X_{k(X)})}{A_0(X)}
$$
where the identity map of $\mathfrak{t}_2(X)$ corresponds to the class of  $[\xi]$ in $\frac {A_0(X_{k(X)})}  {A_0(X)}$. 
Here $\xi$ denotes the generic point of $X$ and $[\xi]$ its class as a cycle in $A_0(X_{k(X)})$. 
The involution $\iota$ induces an involution $\bar \iota$ on $A_0(X_{k(X)})$.  
The splitting 
$$
[\xi] =1/2\left([\xi] +\bar \iota([\xi])\right) +1/2\left([\xi] -\bar \iota([\xi])\right)
$$
in  $A_0(X_{k(X)})$ corresponds to the splitting of the identity map of $\mathfrak{t}_2(X)$ in $\mathfrak{t}_2(X) =\mathfrak{t}_2(Y) \oplus N$. 
Therefore $N=0$ if and only if $\bar \iota([\xi])=[\xi]$. 
From the equalities $A_0(\mathfrak{t}_2(X))= A_0(X)_0$,  $A_0(\mathfrak{t}_2(Y))=A_0(Y)_0$ and $A_0(X)^\iota =A_0(Y)$ we get  
$$
\mathfrak{t}_2(X) \simeq \mathfrak{t}_2(Y)
\;\;\Longleftrightarrow\;\; 
N=0
\;\;\Longleftrightarrow\;\;
\bar \iota([\xi]) =[\xi] 
\;\;\Longleftrightarrow\;\;
A_0(X)^\iota =A_0(X).
$$
The rest follows from \ref{SubSecRCK}  because $X$ and $Y$ are K3 surfaces, with $\rho(X) =\rho(Y)$.
\end{proof}

Next we show that for every K3 surface with a Nikulin involution $\iota$ the finite dimensionality of 
$h(X)$ implies that $\iota$ acts as the identity on $A_0(X)$. 
Therefore for such $X$  Conjecture \ref{ConjHuy} holds true.
\par

\begin{lm}\label{Lemma}
Let $X$ be a K3 surface with a Nikulin involution $\iota$. 
Then $\rho (X) =\rho (Y)$ and $t=6$, 
where $t$ denotes the trace of the involution $\iota$ on $H^2(X,\C)$.
\end{lm}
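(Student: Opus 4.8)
The plan is to establish the two assertions $t=6$ and $\rho(X)=\rho(Y)$ independently; neither should require the other.

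For the trace I would apply the topological Lefschetz fixed point formula to $\iota$. We are given (following Nikulin, and recalled above) that the fixed locus of $\iota$ consists of exactly eight isolated points. Since $\iota$ is symplectic it fixes a nonzero holomorphic $2$-form, so at each fixed point $p$ one has $\det(d\iota_p)=1$; as $p$ is an \emph{isolated} fixed point, $d\iota_p$ has no eigenvalue $1$, which forces $d\iota_p=-\mathrm{id}$. Hence each fixed point is nondegenerate with local Lefschetz index $\operatorname{sign}\det(\mathrm{id}-d\iota_p)=\operatorname{sign}\det(2\,\mathrm{id})=+1$, and the Lefschetz number of $\iota$ equals $8$. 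On the other hand, for the K3 surface $X$ we have $H^1(X,\C)=H^3(X,\C)=0$, while $\iota^*$ acts as the identity on $H^0(X,\C)$ and on $H^4(X,\C)$ (an automorphism of a connected smooth proper variety has degree one). Therefore
$$
8=\sum_i(-1)^i\operatorname{tr}\bigl(\iota^*\mid H^i(X,\C)\bigr)=1+t+1,
$$
so $t=6$.

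For the Picard numbers I would use the rational Hodge isometry $\phi_\iota\colon T_{X,\Q}\iso T_{Y,\Q}$ recorded above (from [VG-S, 2.4]). For any complex projective K3 surface $S$ one has $b_2(S)=22$ together with the orthogonal decomposition $H^2(S,\Q)=\NS(S)_\Q\oplus T_{S,\Q}$, hence $\dim_\Q T_{S,\Q}=22-\rho(S)$. Applying this to $X$ and to $Y$, which is again a K3 surface, and using $\dim_\Q T_{X,\Q}=\dim_\Q T_{Y,\Q}$, we conclude $\rho(X)=\rho(Y)$. As a consistency check one can recover $b_2(Y)$ from the geometry: the blow-up formula gives $\mathfrak{h}(\tilde X)=\mathfrak{h}(X)\oplus\Le^{\oplus 8}$, and $\tilde\iota$ fixes each of the eight exceptional curves pointwise, so the eight new classes are $\tilde\iota$-invariant and $b_2(Y)=\dim H^2(X,\C)^\iota+8$; the value $t=6$ gives $\dim H^2(X,\C)^\iota=14$, whence $b_2(Y)=22$, as it must be for the K3 surface $Y$.

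I do not expect any serious obstacle: both parts are essentially bookkeeping once the eight-fixed-point property of a Nikulin involution and the rational Hodge isometry of the transcendental lattices are granted. The one step needing genuine (if small) care is the local analysis at the fixed points used to evaluate the Lefschetz number — verifying that they are nondegenerate with index $+1$ — and this is precisely where the symplectic hypothesis enters.
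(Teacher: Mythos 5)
Your proof is correct, and for the trace statement it takes a genuinely different route from the paper. The paper obtains $t=6$ by quoting the formula $e(X)+t+2=2e(Y)-2k$ of Dolgachev--Mendes Lopes--Pardini [D-ML-P, 4.2] for an involution on a regular surface, and then substituting $e(X)=e(Y)=24$, $k=8$; this uses as input not only Nikulin's count of eight fixed points but also the fact that the resolved quotient $Y$ is again a K3 (to know $e(Y)=24$). You instead apply the topological Lefschetz fixed point formula directly on $X$: the eight fixed points, the local analysis $d\iota_p=-\mathrm{id}$ and index $+1$, and $H^1=H^3=0$ give $8=2+t$, so $t=6$ without any reference to $Y$'s topology --- a more self-contained computation (indeed the [D-ML-P] formula is essentially the Lefschetz formula combined with the Euler characteristic of a quotient, so the two arguments are cousins, but yours needs strictly less input). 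One small remark: the index of an isolated nondegenerate fixed point of a \emph{holomorphic} map is automatically $+1$, since $\det_{\mathbb R}(\mathrm{id}-d\iota_p)=|\det_{\mathbb C}(\mathrm{id}-d\iota_p)|^2>0$; so the symplectic hypothesis is not really what makes the index positive --- it enters through Nikulin's description of the fixed locus (eight isolated points), which you are in any case allowed to quote. For $\rho(X)=\rho(Y)$ your argument coincides with the paper's: both deduce it from $\dim T_{X,\Q}=\dim T_{Y,\Q}$ (via the Van Geemen--Sarti rational Hodge isomorphism, or equivalently the triviality of $\iota^*$ on $H^2_{\mathrm{tr}}$) together with $b_2=22$ for both K3 surfaces; your closing consistency check on $b_2(Y)$ is harmless and not circular.
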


\begin {proof} 
Let $X$ be a smooth projective surface over $\C$ with $q(X) =0$ and  an involution $\sigma$ 
and let $Y$ be a desingularization of $X/\sigma$. 
Let $e(-)$ be the topological Euler characteristic. 
Then we have (see [D-ML-P, 4.2])
$$
e(X) +t  +2 = 2e(Y) -2k
$$
where $t$ is the trace of the involution $\sigma$ on $H^2(X, \C)$ 
and $k$ is the number of  the isolated fixed points of $\sigma$. 
If $X$ and $Y$ are K3 surfaces  and $\sigma =\iota$ is  a Nikulin involution,  
then $e(X)=e(Y)=24$ and $k=8$. 
Therefore we get $t =6$. 
Since $\dim \ H^{2}_{\textrm{tr}}(X) = \dim \ H^{2}_{\textrm{tr}}(Y)$ and $b_2(X)=b_2(Y) =22$,  
we have $\rho(X) =\rho(Y)$.
\end{proof}

\begin{thm}\label{TeoNik}
Let $X$ be K3 surface with a Nikulin involution $\iota$. 
If $\mathfrak{h}(X)$ is finite dimensional then $\mathfrak{h}(X) \simeq \mathfrak{h}(Y)$,
therefore $\iota$ acts as the identity on $A_0(X)$.
\end{thm}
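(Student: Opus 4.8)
The plan is to show that the motive $N$ appearing in the decomposition $\mathfrak{t}_2(X)=\mathfrak{t}_2(Y)\oplus N$ recorded just before Proposition \ref{PropInvariance} vanishes; granting this, Proposition \ref{PropInvariance} immediately yields $\mathfrak{h}(X)\simeq\mathfrak{h}(Y)$, $M(X)\simeq M(Y)$ in $DM_{gm}(\C)$, and $A_0(X)^\iota=A_0(X)$. First I would observe that $N$ is finite-dimensional: by the refined Chow--K\"unneth decomposition of \S\ref{SubSecRCK} the motive $\mathfrak{t}_2(X)$ is a direct summand of $\mathfrak{h}(X)$, so since $\mathfrak{h}(X)$ is finite-dimensional by hypothesis and finite-dimensionality passes to direct summands ([An2, 3.7]), both $\mathfrak{t}_2(X)$ and its summand $N$ are finite-dimensional.

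Next I would show that the Weil-cohomology realization of $N$ vanishes. By Definition \ref{DefTP} the realizations $H^*(\mathfrak{t}_2(X))$ and $H^*(\mathfrak{t}_2(Y))$ are concentrated in degree $2$, where they equal $H^2_{\textrm{tr}}(X)$ and $H^2_{\textrm{tr}}(Y)$ respectively; applying $H^*$ to $\mathfrak{t}_2(X)=\mathfrak{t}_2(Y)\oplus N$ gives $H^i(N)=0$ for $i\neq 2$ and $H^2_{\textrm{tr}}(X)\cong H^2_{\textrm{tr}}(Y)\oplus H^2(N)$ as $\Q$-vector spaces. But the Nikulin involution provides a rational Hodge isomorphism $\phi_\iota\colon T_{X,\Q}\iso T_{Y,\Q}$, so $\dim H^2_{\textrm{tr}}(X)=\dim H^2_{\textrm{tr}}(Y)$ (equivalently, by Lemma \ref{Lemma}, $\rho(X)=\rho(Y)$ and $\dim H^2_{\textrm{tr}}=22-\rho$ on both sides); hence $H^2(N)=0$. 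Thus $N$ is a finite-dimensional motive with vanishing Weil cohomology, and Kimura's theorem ([Ki, 7.3], recalled after Theorem \ref{TeoConservativity}) forces $N=0$, i.e. $\mathfrak{t}_2(X)\simeq\mathfrak{t}_2(Y)$.

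Then Proposition \ref{PropInvariance} concludes the proof: $\mathfrak{h}(X)\simeq\mathfrak{h}(Y)$ in $\sM_{rat}(\C)$ (hence $M(X)\simeq M(Y)$ in $DM_{gm}(\C)$) and $\iota$ acts as the identity on $A_0(X)$; since $CH^2$ of a K3 surface is torsion-free, this also confirms Conjecture \ref{ConjHuy} for $f=\iota$. I expect the middle step to be the main obstacle: one must make sure the motivic direct-sum decomposition $\mathfrak{t}_2(X)=\mathfrak{t}_2(Y)\oplus N$ realizes compatibly with the Hodge-theoretic comparison of transcendental lattices, so that the numerical equality $\dim H^2_{\textrm{tr}}(X)=\dim H^2_{\textrm{tr}}(Y)$ --- which itself rests on the existence of $\phi_\iota$ and on the Euler-characteristic/trace identity of Lemma \ref{Lemma} --- genuinely forces $H^*(N)=0$. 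Everything else is a formal consequence of finite-dimensionality (Theorem \ref{TeoConservativity} and [Ki, 7.3]) and of Proposition \ref{PropInvariance}.
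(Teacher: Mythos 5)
Your proposal is correct, and its engine is the same as the paper's: a residual summand with vanishing Weil cohomology, killed by Kimura finite-dimensionality (Theorem \ref{TeoConservativity}, [Ki, 7.3]). The difference is where you apply it. You work directly with the splitting $\mathfrak{t}_2(X)=\mathfrak{t}_2(\tilde X)=\mathfrak{t}_2(Y)\oplus N$ recorded before Proposition \ref{PropInvariance}, kill $N$ by the dimension count $\dim H^2_{\textrm{tr}}(X)=\dim H^2_{\textrm{tr}}(Y)$ (from $\phi_\iota$, or equivalently from $\rho(X)=\rho(Y)$ in Lemma \ref{Lemma}), and then let Proposition \ref{PropInvariance} deliver both $\mathfrak{h}(X)\simeq\mathfrak{h}(Y)$ and the action of $\iota$ on $A_0(X)$. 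The paper instead carries out the argument at the level of $\mathfrak{h}_2(\tilde X)$, building explicit refined Chow--K\"unneth projectors from an eigenbasis $E_k, H_j, D_l$ of $\textrm{NS}(\tilde X)\otimes\C$ (this is where the trace computation $t=6$ of Lemma \ref{Lemma} is used), identifying ${\mathfrak{h}_2}^{\textrm{alg}}(Y)\simeq\mathbb{L}^{\oplus\rho}$, and only then isolating a summand $M$ with $H^*(M)=0$; your route buys economy by bypassing this bookkeeping, while the paper's buys an explicit description of the isomorphism in terms of the quotient map $g$. Two small remarks: the ``compatibility'' worry you flag at the end is not actually an issue --- additivity of the realization functor plus the pure dimension count already forces $H^*(N)=0$, no matching of the Hodge isomorphism with the motivic splitting is needed; and note that proving $N=0$ (rather than merely an abstract isomorphism $\mathfrak{t}_2(X)\simeq\mathfrak{t}_2(Y)$) is exactly what the chain of equivalences in the proof of Proposition \ref{PropInvariance} requires, so your direct vanishing argument plugs in cleanly.
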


\begin{proof}   
Let $Y$ be the desingularization of $X/\iota$. 
Then $Y$ is a K3 surface and we have  $\mathfrak{t}_2(\tilde X) \simeq \mathfrak{t}_2(X)$ 
because $\mathfrak{t}_2(-)$ is a birational invariant for surfaces, see [K-M-P].    
Also 
$$
H^2_{\textrm{tr}}(X) \simeq H^2_{\textrm{tr}}( \tilde X) \simeq  H^2_{\textrm{tr}}(Y)
$$ 
because the Nikulin involution acts trivially on $H^2_{\textrm{tr}}(X)$.  
Let $E_i, 1 \le i \le 8$ be  the exceptional divisors of the blow-up $\tilde X \to X$ 
and let  $g_*(E_i) =C_i$ be the corresponding $(-2)$-curves on $Y$. 
We have $\rho = \textrm{rank}(\textrm{NS}(X)) \ge 9$,  $b_2(X) =b_2(Y)=22$ 
and  $e(X)=e(Y)=24$,  where $e(X)$ is the topological Euler characteristic.  
Let $t$ be the trace of the action of the involution $\iota$ on the vector space $H^2(X ,\C)$.
By Lemma \ref{Lemma} we have $t =6$. 
The involution $\iota$ acts trivially on $H^2_{tr}(X)$ 
which is a subvector space of $H^2(X,\C)$ of dimension $22-\rho$;  
therefore the trace of the action of $\iota$ on $\textrm{NS}(X)\otimes \C$ equals $\rho-16$. 
Since the only eigenvalues of  an involution are $+1$ and $-1$ we can find an orthogonal basis for 
$\textrm{NS}(X) \otimes \C$ of the form $H_1,\cdots, H_r;\, D_1, \cdots, D_8 $, 
with $r =\rho-8 \ge 1$  such that $\iota_*(H_j) =H_j$ and $\iota_*(D_l) =- D_l$. 
Then $\textrm{NS}(\tilde X)\otimes \C$ has a basis of the form 
$E_1,\cdots, E_8;\, H_1,\cdots, H_r;\, D_1, \cdots, D_8$.  
Since $X$ and $Y$ are K3 surfaces we have $q(X) =q(Y) =q(\tilde X)=0$. 
Therefore we can find Chow-K\"unneth decompositions for the motives  
$\mathfrak{h}(X)$, $\mathfrak{h}(\tilde X)$ such that $\mathfrak{h}_1=\mathfrak{h}_3=0$ and 
$$
\mathfrak{h}(X) 
\;\; =\;\;   
\un \oplus\;  {\mathfrak{h}_{2}}^\textrm{alg}(X)  \oplus \mathfrak{t}_2(X) \;\oplus \mathbb{L}^2 
\;\; \simeq\;\;  
\un \oplus\; \mathbb{L}^{\oplus \rho}\oplus \mathfrak{t}_2(X) \;\oplus \mathbb{L}^2 
$$
$$ 
\mathfrak{h}(\tilde X)
\;\; =\;\; 
\un \oplus\; 
{\mathfrak{h}_{2}}^\textrm{alg}(\tilde X) \oplus \mathfrak{t}_2(X)
 \;\oplus \mathbb{L}^2 
\;\;\simeq\;\;  
\mathfrak{h}(X) \oplus \mathbb{L}^{\oplus 8}
$$ 
where ${\mathfrak{h}_2}^\textrm{alg}(\tilde X)=  (\tilde X,\pi^{\textrm{alg}}_2(\tilde X)) $  with $\pi^{\textrm{alg}}_2(\tilde X)=\Gamma   + I $ and  
$$
\Gamma =\sum_{1 \le k \le 8} \frac  { [E_k \times E_k]}{E^2_k} + \sum _{1 \le j \le r }\frac {[H_j \times H_j]}{H^2_j},
\quad  I =\sum_{1 \le h \le r} \frac {[D_h \times D_h]}{D^2_h}. 
$$
Also  
$$  
\mathbb{L}^{\oplus 8} \simeq \left(\tilde X,  \sum_{1 \le k \le 8} \frac  { [E_k \times E_k]}{E^2_k} \right)
$$
Let $g \colon\tilde X \to Y$ and let  $p =1/2(\Gamma^t_g\circ \Gamma_g )\in A^2(\tilde X \times \tilde X)$: 
then $p$ is a projector and 
$$
\mathfrak{h}(\tilde X) =(\tilde X, p)\oplus (\tilde X , \Delta_{\tilde X} -p) \simeq \mathfrak{h}(Y) \oplus (\tilde X , \Delta_{\tilde X}-p)  
$$
because $(\tilde X, p)\simeq \mathfrak{h}(Y) $ by [Ma, \S 3 Example 1].  
The set of  $r +8=\rho$ divisors $g_*(E_k)=C_k $,  for $1 \le k \le  8$ and $g_*(H_ j) \simeq H_j $, for $ 1 \le  j \le r$  
gives an orthogonal basis for $\textrm{NS}(Y)\otimes \Q$. 
Therefore we can find a Chow-K\"unneth decomposition of $\mathfrak{h}(Y)$ such that  
$$
{\mathfrak{h}_2}^\textrm{alg}(Y) \simeq   (\tilde X , \Gamma)\simeq \mathbb{L}^{\oplus \rho}
$$
and we get
$$
\mathfrak{h}_2(\tilde X) 
\;\;=\;\; 
{\mathfrak{h}_2}^\textrm{alg}(\tilde X)\oplus \mathfrak{t}_2(X)  
\;\;\simeq\;\;  
{\mathfrak{h}_2}^\textrm{alg}(Y) \oplus \mathbb{L}^{\oplus 8} \oplus \mathfrak{t}_2(Y)\oplus M  
$$
where $H^*(M)=0$ because $H^2_{\textrm{tr}}(\tilde X) =H^2_{\textrm{tr}}(X)=H^2_{\textrm{tr}}(Y)$. 
From Theorem \ref{TeoConservativity} it follows  that $M=0$ and we get an isomorphism
$$
\mathfrak{h}_2(\tilde X)  
\;\;\simeq\;\;   
\mathfrak{h}_2(Y)\oplus \mathbb{L}^{\oplus 8} 
\;\;\simeq\;\;  
\mathfrak{h}_2(X) \oplus  \mathbb{L}^{\oplus 8}
$$
\noindent  
which implies $\mathfrak{h}(X) \simeq \mathfrak{h}(Y)$. 
The rest follows from Proposition \ref{PropInvariance}.
\end{proof}

The following result  gives examples of K3 surfaces with a  Nikulin involution $\iota$  
such that  $\iota$ acts as the identity on $A_0(X)$.

\begin {thm}\label{TeoRhoHigh} 
Let $X$ be a smooth projective K3 surface over $\C$ with $\rho(X) =19,20$. 
Then $X$ has a Nikulin involution $\iota$,  
$\mathfrak{h}(X)$ is finite dimensional and $\iota$ acts as the identity on $A_0(X)$.
\end{thm}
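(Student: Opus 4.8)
The plan is to deduce all three assertions from Theorem~\ref{TeoNik}, feeding into it the two facts about complex K3 surfaces of Picard rank $19$ or $20$ that were already recalled in Section~2; thus the argument will be an assembly of known results rather than a fresh construction.

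First I would exhibit the Nikulin involution. A K3 surface $X$ with $\rho(X)=19$ or $\rho(X)=20$ carries a Shioda--Inose structure in the sense of Morrison [Mo, 6.1] (for the singular case $\rho=20$ this goes back to Shioda and Inose [S-I]). In particular there is a Nikulin involution $\iota$ on $X$ for which the minimal desingularization $Y$ of $X/\iota$ is the Kummer surface $\mathrm{Km}(A)$ of some abelian surface $A$, and the induced map $\phi_{\iota}\colon T_{X,\Q}\iso T_{Y,\Q}$ is a Hodge isometry. This gives the first assertion and pins down the involution $\iota$ referred to in the rest of the statement.

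Next I would check that $\mathfrak{h}(X)$ is finite-dimensional. By [Pe3, Th.~4] one has $\mathfrak{t}_2(X)\simeq\mathfrak{t}_2(\mathrm{Km}(A))$, and $\mathfrak{t}_2(\mathrm{Km}(A))$ is a direct summand of $\mathfrak{h}(A)$ (it is cut out by an idempotent correspondence coming from the degree two quotient $A\to A/\langle -1\rangle$ and the birational invariance of $\mathfrak{t}_2$; here one uses that $[-1]$ acts trivially on $A_0(A)_0$). Since the motive of an abelian variety is finite-dimensional and finite-dimensionality passes to direct summands, $\mathfrak{t}_2(X)$ is finite-dimensional; as its cohomology lies in degree $2$, its odd part has vanishing cohomology and hence vanishes by Theorem~\ref{TeoConservativity}, so $\mathfrak{t}_2(X)$ is evenly finite-dimensional and, by the Remark in~\ref{rCK}, $\mathfrak{h}(X)$ is finite-dimensional. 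Alternatively one may simply cite the list of finite-dimensional K3 motives recalled in Section~2.

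With this in hand, $X$ is a K3 surface equipped with a Nikulin involution $\iota$ and a finite-dimensional motive, so Theorem~\ref{TeoNik} applies word for word and yields $\mathfrak{h}(X)\simeq\mathfrak{h}(Y)$ together with the conclusion that $\iota$ acts as the identity on $A_0(X)$ (this last step being Proposition~\ref{PropInvariance}). I do not expect a genuine obstacle internal to this argument; its entire weight rests on the two imported inputs --- the existence of the Shioda--Inose structure and the identification $\mathfrak{t}_2(X)\simeq\mathfrak{t}_2(\mathrm{Km}(A))$ --- which themselves depend on the algebraicity of the relevant Hodge isometries of transcendental lattices, known precisely in these Picard ranks (cf. Remark~\ref{RemMukai}). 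The one place deserving a careful line is the claim that the transcendental motive of a Kummer surface is finite-dimensional, treated as above.
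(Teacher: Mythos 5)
Your argument is correct, but it runs in the opposite logical direction from the paper's proof. You first secure the finite-dimensionality of $\mathfrak{h}(X)$ (via $\mathfrak{t}_2(X)\simeq\mathfrak{t}_2(\mathrm{Km}(A))$ from [Pe3, Th.~4], or the list recalled in Section~2) and then feed $X$ into Theorem~\ref{TeoNik} to conclude that $\iota$ acts trivially on $A_0(X)$. The paper does not invoke Theorem~\ref{TeoNik} at all: it first proves the $A_0$-statement unconditionally, using [Mo, 6.3~(iv)] to get $E_8(-1)^{2}\subset \mathrm{NS}(X)$ and then Huybrechts' results [Hu2, 6.3, 6.4] to see that the symplectic involution acts as the identity on $A_0(X)$; by the argument of Proposition~\ref{PropInvariance} this kills the complement $N$ in $\mathfrak{t}_2(X)\simeq\mathfrak{t}_2(Y)\oplus N$, and finite-dimensionality then follows from $\mathfrak{t}_2(Y)=\mathfrak{t}_2(A)$ [K-M-P, 6.13] and the finite-dimensionality of abelian (hence Kummer) motives. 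So the paper's route \emph{derives} finite-dimensionality from the Chow-theoretic input, making the theorem self-contained relative to [Mo], [Hu2], [K-M-P], and exhibiting an unconditional instance of Conjecture~\ref{ConjHuy}; your route is shorter and cleanly reuses Theorem~\ref{TeoNik}, but its weight rests on importing the finite-dimensionality statement of [Pe3], which is essentially the content the paper is (re)proving here. Two small points to fix in your write-up: the parenthetical claim that $[-1]$ acts trivially on $A_0(A)_0$ is false (it acts by $-1$ on the Albanese part $CH_0(A)_{(1)}$; what is true, and what is needed, is triviality on the Albanese kernel, i.e.\ on $A_0(\mathfrak{t}_2(A))$ --- this is exactly what [K-M-P, 6.13] packages); and for finite-dimensionality you only need $\mathfrak{t}_2(\mathrm{Km}(A))$ to be a direct summand of a finite-dimensional motive, so that parenthetical can simply be dropped.
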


\begin {proof}  
By [Mo, 6.4]  $X$ admits a Shioda-Inose structure, i.e. there is a Nikulin involution $\iota$ on $X$ such that 
the desingularization $Y$ of the quotient surface $X/\iota$ is a Kummer surface, associated to an abelian surface $A$;
hence $\mathfrak{h}(Y)$ is finite dimensional by [Pe1,  5.8].
The rational map $f : X \to Y$ induces a splitting $\mathfrak{t}_2(X) \simeq \mathfrak{t}_2(Y) \oplus N$. 
Since $\mathfrak{t}_2(Y) $ is finite dimensional  we are left to show that $N=0$. 
By the same argument as in the proof of Proposition \ref{PropInvariance} the vanishing of $N$ is equivalent to $A_0(X)^\iota=A_0(Y)$.  
By  [Mo, 6.3 (iv)]  the Neron Severi group of $X$ contains the sublattice $E_8(-1)^2$.  
Hence by the results in [Hu2,  6.3, 6.4] the  symplectic automorphism $\iota$ acts as the identity on $A_0(X)$.
As, by [K-M-P, 6.13], we have  $\mathfrak{t}_2(Y) =\mathfrak{t}_2(A)$, the motive $\mathfrak{h}(X) $ is finite dimensional and it lies  
in the subcategory  of $\sM_{\textrm{rat}}(\C)$ generated by the motives of abelian varieties.  
\end{proof}

The next theorem gives examples of surfaces $X$ and $Y$ such that $M(X) \simeq M(Y)$ 
but the derived categories $D^b(X)$ and $D^b(Y)$ are not equivalent. 
We will use the following result by Van Geemen and Sarti 

\begin{prop}\label{PropVG-S}([VG-S 2.5]) 
Let $X$ be a complex K3 surface with a Nikulin involution $\iota$ 
and  let  $Y$ be a desingularization of the quotient surface $X/\iota$. 
The involution induces an isomorphism of Hodge structures between  $T_{X,\Q}$ and $T_{Y,\Q}$. 
If the dimension of the  $\Q$-vector space $T_{X,\Q}$ is odd 
there is no isometry between $T_{X,\Q}$ and $T_{Y,\Q}$.
\end{prop}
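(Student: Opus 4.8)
\emph{Proof proposal.} The assertion has two halves, and I would separate them. The first half —- that $\iota$ induces an isomorphism of rational Hodge structures $\phi_\iota\colon T_{X,\Q}\iso T_{Y,\Q}$ —- is already invoked above (it is [VG-S, 2.4]); the plan is to recall its construction in a form that also records the \emph{similitude factor}, and then to use that factor, together with an elementary arithmetic invariant, to rule out any isometry when $\dim_\Q T_{X,\Q}$ is odd. So the strategy is: make $\phi_\iota$ explicit, observe it multiplies the quadratic form by a non-square scalar, and conclude.

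For the construction I would work in the blow-up/quotient diagram preceding Proposition~\ref{PropInvariance}, with $b\colon\tilde X\to X$ the blow-up at the eight fixed points and $g\colon\tilde X\to Y$ the quotient by $\tilde\iota$. Since $\iota$ is symplectic it fixes $H^{2,0}(X)$, and since $T_{X,\Q}$ is an irreducible rational Hodge structure a nonzero $\iota^*$-invariant sub-Hodge structure must be all of it; hence $\iota^*=\mathrm{id}$ on $T_{X,\Q}$ (Nikulin's theorem, as used above). Therefore $b^*$ carries $T_{X,\Q}$ isometrically onto $T_{\tilde X,\Q}$, which lies in the $\tilde\iota$-invariant part $H^2(\tilde X,\Q)^{\tilde\iota}=g^*H^2(Y,\Q)$. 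I would then set $\phi_\iota:=\tfrac12\,g_*\circ b^*$. It is a morphism of rational Hodge structures; it is injective because $g^*g_*=\mathrm{id}+\tilde\iota^*$ restricts to multiplication by $2$ on the invariant part; it sends $H^{2,0}(X)$ onto $H^{2,0}(Y)$, so its image is a sub-Hodge structure of $H^2(Y,\Q)$ containing $H^{2,0}(Y)$, hence containing $T_{Y,\Q}$; and by Lemma~\ref{Lemma} we have $\rho(X)=\rho(Y)$, so $\dim_\Q T_{X,\Q}=22-\rho(X)=\dim_\Q T_{Y,\Q}$, which forces $\phi_\iota$ to be an isomorphism onto $T_{Y,\Q}$. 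Finally, using $b_*b^*=\mathrm{id}$ and the projection formula one computes, for $x,y\in T_{X,\Q}$, that $\langle g_*u,g_*v\rangle_Y=\langle u,(1+\tilde\iota^*)v\rangle_{\tilde X}=2\langle u,v\rangle_{\tilde X}$ on invariant classes, whence $\langle\phi_\iota(x),\phi_\iota(y)\rangle_Y=\tfrac12\langle x,y\rangle_X$. Thus $\phi_\iota$ is a similitude of ratio $\tfrac12$, i.e. the $\Q$-quadratic forms satisfy $q_{X}\cong 2\,q_{Y}$.

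For the second half, suppose toward a contradiction that there is an isometry $\psi\colon(T_{X,\Q},q_X)\iso(T_{Y,\Q},q_Y)$. Then $\phi_\iota\circ\psi^{-1}$ is a self-similitude of $(T_{Y,\Q},q_Y)$ of ratio $\tfrac12$ (equivalently $q_Y\cong 2\,q_Y$ over $\Q$). Comparing Gram determinants, a self-similitude of ratio $\lambda$ on a space of dimension $n$ forces $\lambda^{n}\in(\Q^*)^2$; with $\lambda=\tfrac12$ this says $2^{n}\in(\Q^*)^2$, hence $n$ even. But $n=\dim_\Q T_{X,\Q}=22-\rho(X)$ is odd by hypothesis, a contradiction, so no isometry $T_{X,\Q}\cong T_{Y,\Q}$ exists. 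The only genuinely delicate points are pinning down the factor $2$ in the similitude (all the rest of the construction is routine bookkeeping with $b^*$, $g_*$ and the previously cited vanishing $\iota^*=\mathrm{id}$ on $T_{X,\Q}$) and the elementary but essential observation that $2$ is not a square in $\Q^*/(\Q^*)^2$; I expect the factor-tracking to be the main place where care is needed.
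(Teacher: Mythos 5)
Your proof is correct; the paper itself gives no argument for this Proposition (it is simply quoted from [VG-S, 2.5]), and your write-up is essentially the Van Geemen--Sarti argument: the transfer $\tfrac12\,g_*\circ b^*$ identifies $T_{X,\Q}$ with $T_{Y,\Q}$ as rational Hodge structures while rescaling the quadratic form by a factor $2$ (up to normalization), and comparing discriminants in $\Q^{*}/(\Q^{*})^{2}$ shows a similitude of non-square ratio is incompatible with an isometry in odd rank. The supporting facts you invoke (irreducibility of $T_{X,\Q}$ giving $\iota^{*}=\mathrm{id}$ there, $g^{*}H^{2}(Y,\Q)=H^{2}(\tilde X,\Q)^{\tilde\iota}$, and $\rho(X)=\rho(Y)$ from Lemma \ref{Lemma}) are all standard and used correctly, so no gaps need flagging.
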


\begin{thm}\label{TeoRho9}
Let $X$ be a complex K3 surface with a Nikulin involution $\iota$ such that $\rho(X)=9$ 
and let $Y$ be the desingularization of $X/\iota$.
Assume that the map $f \colon X \to Y$ induces an isomorphism between $\mathfrak{t}_2(X) $ and $\mathfrak{t}_2(Y)$. 
Then $\iota$ acts as the identity on $A_0(X)$, 
the rational map $f \colon X \to Y$  induces an isomorphism  $M(X)\iso M(Y)$ in $DM_{\textrm{gm}}(\C)$,  
but  the isomorphism of Hodge structures $\phi_\iota \colon T_{X,\Q} \to T_{Y,\Q}$ is not an isometry.
\end{thm}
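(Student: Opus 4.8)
The plan is to read off all three assertions from the two structural results already in hand, Proposition \ref{PropInvariance} and Proposition \ref{PropVG-S}, together with the elementary lattice numerics of K3 surfaces. First I would invoke Proposition \ref{PropInvariance}: the hypothesis that $f\colon X\to Y$ induces an isomorphism $\mathfrak{t}_2(X)\simeq\mathfrak{t}_2(Y)$ is precisely equivalent there to $A_0(X)^\iota=A_0(X)$, which is the first assertion, namely that $\iota$ acts as the identity on $A_0(X)$; and the second half of that proposition states directly that, under this hypothesis, the rational map $f$ induces an isomorphism $\mathfrak{h}(X)\simeq\mathfrak{h}(Y)$ in $\sM_{rat}(\C)$ and hence $M(X)\iso M(Y)$ in $DM_{gm}(\C)$. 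So the first two assertions require essentially no new argument beyond quoting Proposition \ref{PropInvariance}.

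For the third assertion I would compute the rank of the transcendental lattice. By Lemma \ref{Lemma} a Nikulin involution forces $\rho(X)=\rho(Y)$, so with $\rho(X)=9$ and $b_2(X)=b_2(Y)=22$ we get $\dim_\Q T_{X,\Q}=\dim_\Q T_{Y,\Q}=22-9=13$, which is odd. Proposition \ref{PropVG-S}, due to Van Geemen and Sarti, then applies verbatim: when $\dim_\Q T_{X,\Q}$ is odd there is no isometry at all between $T_{X,\Q}$ and $T_{Y,\Q}$, so a fortiori the Hodge isomorphism $\phi_\iota$ induced by $\iota$ is not an isometry, which is the ``but'' in the statement.

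It is worth recording the point of the example, even though it is not literally part of the assertion: by Theorem \ref{TeoOrK3} the surfaces $X$ and $Y$ would be derived equivalent only if $T_X$ and $T_Y$ were Hodge isometric over $\Z$, and this fails here since there is not even a rational Hodge isometry; hence $D^b(X)\not\simeq D^b(Y)$ while $M(X)\simeq M(Y)$. I do not anticipate a genuine obstacle, as the proof is an assembly of cited results; the one conceptual point to flag is that an isomorphism $\mathfrak{t}_2(X)\simeq\mathfrak{t}_2(Y)$ of Chow motives only yields an isomorphism of the underlying transcendental Hodge structures and carries no information about their intersection forms, which is exactly why ``isomorphic motives'' and ``non-isometric transcendental lattices'' can coexist. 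The existence of surfaces meeting the hypotheses is deferred to the examples that follow.
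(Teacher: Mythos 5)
Your proposal is correct and follows essentially the same route as the paper: the non-isometry comes from Proposition \ref{PropVG-S} since $\dim_\Q T_{X,\Q}=22-9=13$ is odd, while the $A_0(X)$-statement and the isomorphism $M(X)\simeq M(Y)$ are exactly what Proposition \ref{PropInvariance} (equivalently, the refined Chow--K\"unneth decomposition with $\rho(X)=\rho(Y)$) delivers under the hypothesis that $f$ induces $\mathfrak{t}_2(X)\simeq\mathfrak{t}_2(Y)$. The paper's own proof is just a compressed version of this same assembly, so no further comment is needed.
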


\begin {proof}
The Nikulin involution $\iota$ induces an isomorphism of Hodge structures 
$\phi_\iota \colon T_{X,\Q} \to T_{Y,\Q}$ 
which by Proposition \ref{PropVG-S} is not an isometry because $\dim\, T_{X,\Q} = 22 -9 $ is odd. 
Since $X$ and $Y$ are both K3 surfaces the isomorphism $\mathfrak{t}_2(X) \simeq \mathfrak{t}_2(Y)$ implies 
$\mathfrak{h}(X) \simeq \mathfrak{h}(Y)$ in $\sM^{\textrm{eff}}_{\textrm{rat}}(\C)$, hence also $M(X) \simeq M(Y)$.
\end{proof}
 
\begin{exs}\label{ExamplesVG-S}  
The following are  examples of K3 surfaces $X$ with a Nikulin involution $\iota$ and $\rho(X)=9$ 
such that $\mathfrak{t}_2(X)\simeq \mathfrak{t}_2(Y)$ hence $\mathfrak{h}(X) \simeq \mathfrak{h}(Y)$.
Therefore $X$ satisfies Huybrechts' conjecture \ref{ConjHuy}, i.e. $\iota$ acts as the identity on $A_0(X)$.
On the other hand, $X$ and $Y$ are not Fourier-Mukai partner because, as in Theorem \ref{TeoRho9},
there is no Hodge isometry between their transcendental lattices.
The proof  of the isomorphism $\mathfrak{t}_2(X) \simeq \mathfrak{t}_2(Y)$ in  these cases 
follows directly from the geometric description of $X$ and $Y$ given by Van Geemen and Sarti in [VG-S], see [Pe2].
 
(i) $X$ a double cover of $\P^2$ branched over a sextic curve and $Y$ a double cover of a quadric cone in $\P^3$; 
\par

(ii) $X$ is a double cover of a quadric in $\P^3$ and $ Y$ is the double cover of $\P^2$ branched over a reducible sextic;\par

(iii) $X$ is the intersection of 3 quadrics in $\P^5$ and $Y$ is a quartic surface in $\P^3$.
\end{exs}

 \end{document}